\documentclass[11pt]{article}
\usepackage{psfrag,amsmath}
\usepackage{amsthm}
\usepackage{amssymb}
\usepackage{mathrsfs}
\usepackage{enumerate}
\usepackage{amssymb,a4wide,latexsym,makeidx,epsfig,fleqn}
\usepackage{bm}
\usepackage{floatrow}
\usepackage{makecell}
\usepackage{geometry}
\usepackage{enumerate}
\usepackage{graphicx}
\usepackage{multirow}
\usepackage{color}
\usepackage{array}
\usepackage{subfigure}
\usepackage{float}
\usepackage{cases}
\usepackage{comment}
\usepackage[utf8]{inputenc}
\usepackage[T1]{fontenc}

\usepackage[titletoc]{appendix}

\usepackage{longtable}

\allowdisplaybreaks


\usepackage{titlesec}
\titleformat{\chapter}[display]
{\normalfont\Large\bfseries}{\thechapter}{11pt}{\Large}
\titleformat{\section}
{\normalfont\large\bfseries}{\thesection}{11pt}{\large}
\titlespacing*{\chapter}{0pt}{0pt}{15pt} 
\titlespacing*{\section}{0pt}{3.5ex plus 1ex minus .2ex}{2.3ex plus .2ex}
\usepackage[titletoc]{appendix}

\usepackage{indentfirst}
\setlength{\parindent}{1em}

\newtheorem{theorem}{Theorem}[section]

\newtheorem{lemma}[theorem]{Lemma}

\begin{document}
\textwidth 150mm \textheight 225mm
\title{The $A_\alpha$ spectral radius of $k$-connected graphs with given diameter }
\author{{Xichan Liu\textsuperscript{a,b}, Ligong Wang\textsuperscript{a,b,}\footnote{Corresponding author.}}\\
		{\small \textsuperscript{a} School of Mathematics and Statistics
		}\\
		{\small Northwestern Polytechnical University, Xi'an, Shaanxi 710129, P.R. China.}\\
		{\small \textsuperscript{b} Xi'an-Budapest Joint Research Center for Combinatorics}\\
		{\small  Northwestern Polytechnical University, Xi'an, Shaanxi 710129, P.R. China.}\\
		{\small E-mail: xichanliumath@163.com, lgwangmath@163.com}}
\date{}
\maketitle

\begin{center}
\begin{minipage}{135mm}
\vskip 0.3cm
\begin{center}
{\small {\bf Abstract}}
\end{center}
{\small
Let $G$ be a graph with adjacency matrix $A(G)$  and degree diagonal matrix $D (G)$. In 2017, Nikiforov defined the matrix $A_\alpha(G) = \alpha D(G) + (1-\alpha)A(G)$ for any real $\alpha\in[0,1]$. The largest eigenvalue of $A_\alpha(G)$ is called the $A_\alpha$ spectral radius or the $A_\alpha$-index of  $G$.
 Let $\mathcal{G}_{n,k}^d$ be the set of $k$-connected graphs with order $n$ and diameter $d$. In this paper, we determine the graphs with maximum $A_\alpha$ spectral radius among all graphs in $\mathcal{G}_{n,k}^d$ for any $\alpha\in[0,1)$, where $k\geq2$ and $d\geq2$. We generalizes the results of adjacency matrix in [P. Huang, W.C. Shiu, P.K. Sun, Linear Algebra Appl., 2016, Theorem 3.6] and the results of signless Laplacian matrix in [P. Huang, J.X. Li, W.C. Shiu, Linear Algebra Appl., 2021, Theorem 3.4]. 
\vskip 0.1in \noindent {\bf Key Words}:  $A_\alpha$ spectral radius; $k$-connected; diameter  \vskip
0.1in \noindent {\bf AMS Subject Classification (2020)}: \ 05C50}
\end{minipage}
\end{center}

\section{Introduction}\label{intro}
All graphs considered here are simple. Let $G$ be a graph with vertex set $V(G) = \{v_1, v_2,\ldots, v_n\}$ and edge set $E(G)$.
The order of $G$ is $n$ and the size of $G$ is $m$, where $n = |V(G)|$ and $m = |E(G)|$.
The neighbor set of a vertex $v$ in $G$ is denoted by $N_G(v)$, the degree of the vertex $v$ in $G$ is denoted by $d_G(v)$, where $d_G(v)=|N_G(v)|$. The minimum degree and the maximum degree of $G$ are denoted by $\delta(G)$ and $\Delta(G)$, respectively. The distance $d_G(u,v)$  between two distinct vertices $u, v $ of a connected graph $G$ is the length of the shortest path connecting them. The diameter $d$ of $G$ is the maximum distance among all distinct vertices pairs of $G$.
For a subset $W\subseteq V(G)$,	let $G[W]$ be the subgraph induced by $W$ and $G-W=G[V(G)\setminus W]$.
A graph is \emph{$k$-connected} if $G-W$ is connected for any subset $W\subseteq V(G)$ with $|W|<k$.
The sequential join $G_1\vee \cdots \vee G_k$ of graphs $G_1, \ldots, G_k$, is the graph formed by taking one copy of each graph and adding additional edges from each vertex of $G_i$ to all vertices of $G_{i+1}$, for $i=1,2,\ldots,k-1$.
A path and a complete graph of order $n$ are denoted by $P_n$ and $K_n$, respectively.
Unless otherwise stated, we use the standard notations and terminologies in \cite{Bondy-b, stevan-b}.

The $M$ spectral radius of a graph $G$ is the largest eigenvalue of $M$, where $M$ is a responding graph matrix defined in a prescribed way, such as adjacency matrix, (signless) Laplacian matrix and others. The adjacency matrix of a graph $G$ with order $n$ is an $n\times n$ 0-1 matrix, denoted by $A(G)=[a_{ij}]_{n\times n}$,
where $a_{ij}=1$ if $v_iv_j\in E(G)$, and $a_{ij}=0$ otherwise.
The degree diagonal matrix of $G$ is $D(G)=diag(d_{G}(v_1), d_{G}(v_2),\ldots, d_{G}(v_n))$. The Laplacian matrix of a graph $G$ is $L(G)=D(G)-A(G)$, 
and
the signless Laplacian matrix of $G$ is $Q(G)=D(G)+A(G)$. In 2017,
Nikiforov \cite{Nik-1} defined the $A_\alpha$ matrix of a graph $G$ as $A_\alpha(G)= \alpha D(G) + (1-\alpha)A(G)$ for any real $\alpha\in[0,1]$,  which can underpin a unified theory of $A (G)$ and $Q (G)$.
The $A_\alpha$ spectral radius of a graph $G$ is denoted as $\lambda_\alpha(G)$. Based on the well-known Perron-Frobenius theorem, there exists a positive eigenvector
$X=(x_{v_1},x_{v_2},\ldots,x_{v_n})^T$
corresponding to $\lambda_{\alpha}(G)$, also called the Perron vector of $G$. For a vertex $v\in V(G)$, the eigenequation of $A_\alpha(G)$ corresponding to $v$ is written as
\begin{align}\label{AX}
\lambda_\alpha(G)x_v=\alpha d_G(v)x_v+(1-\alpha)\sum_{uv\in E(G)}x_u.
\end{align}
Moreover, we have
\begin{align}\label{xax}
X^TA_\alpha X=\alpha\sum_{u\in V(G)}d_G(u)x_u^2+2(1-\alpha)\sum_{uv\in E(G)}x_ux_v.
\end{align}
By Rayleigh Quotient, we have
\begin{align}\label{rayleigh}
\lambda_{\alpha}(G)=\sup_{ \Vert X \Vert\neq 0}\frac{X^TA_\alpha(G) X}{X^TX}.
\end{align}

The Brualdi-Solheid problem, which was first presented in 1986 by Brualdi and Solheid in article \cite{Brualdi-1}, is a well-known question about finding a tight bound for the spectral radius in a set of graphs and characterizing the extremal graphs.
Many recent results on this problem for various kinds of graphs and their adjacency spectral radius have been obtained, we refer to articles \cite{B-Z-1, F-Y-Z-1, FZL, WG, ZWQ}, some monographs \cite{15stanic, stevan-b} and the references therein. The results of the Brualdi-Solheid problem about the signless Laplacian spectral radius and $A_\alpha$ spectral radius, we refer to articles \cite{GHS,GRS,JLW,LGW,YFI} and \cite{CPC,FW,HLZ,LQ,P,ZHW}, respectively. Further, other results for the $A_\alpha$ spectral radius of digraph can be refered to \cite{Aa-3, Aa-2, Aa-1}  and the references therein.  Recently, the spectral extremal problem of adjacency matrix and $A_\alpha$ matrix has also attracted lots of attention, some results can be found in articles \cite{C-D-T-1,CFT} and the references therein.

Let $\mathcal{G}_{n,k}^d$ be the set of $k$-connected
graphs of order $n$ with given diameter $d$.
Huang, Shiu, Sun \cite{Huang-2} and Huang, Li, Shiu \cite{Huang-1} solved the Brualdi-Solheid problem in $\mathcal{G}_{n,k}^d$ for adjacency spectral radius and signless Laplacian spectral radius, respectively. Their conclusions are as follows.
\begin{theorem}[\cite{Huang-1,Huang-2}]\label{H-main}
For $k\geq 2$ and $d\geq 2$, the graph $K_1\vee K_{n_1}\vee \cdots \vee K_{n_{d-1}}\vee K_1$ attains the maximum $($signless Laplacian$)$ spectral radius in $\mathcal{G}_{n,k}^d$, where $n_i=k$ for $i\in\{1,2,\ldots,d-1\}\setminus\{\lfloor\frac{d}{2}\rfloor\}$, and $n_{\lfloor\frac{d}{2}\rfloor}\geq k$.
\end{theorem}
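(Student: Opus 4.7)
The plan is to take $G \in \mathcal{G}_{n,k}^d$ extremal for $\lambda_\alpha$, let $X$ be its Perron vector, fix a diametral path $v_0 v_1 \cdots v_d$, and consider the distance partition $V_i := \{u \in V(G) : d_G(v_0,u) = i\}$ for $0 \leq i \leq d$. The monotonicity I would exploit throughout is that adding any non-edge $uv$ to $G$ increases the Rayleigh quotient in \eqref{rayleigh} evaluated at $X$ by exactly $\alpha(x_u^2 + x_v^2) + 2(1-\alpha)x_u x_v > 0$, so $\lambda_\alpha$ strictly increases; hence every edge whose addition preserves both $\mathrm{diam}(G)=d$ and $k$-connectivity must already be in $E(G)$. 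Since the BFS structure forces edges to go only within a layer or between consecutive layers, while any edge between non-consecutive layers would shorten the $v_0$-to-$v_d$ distance below $d$, extremality forces each $V_i$ to be a clique and each pair $V_i, V_{i+1}$ to be completely joined. Consequently $G \cong K_{n_0}\vee K_{n_1}\vee\cdots\vee K_{n_d}$ with $n_i = |V_i|$ and $n_0 = 1$ by definition of $V_0$.

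Next I would determine the remaining sizes. The $k$-connectivity forces $n_i \geq k$ for every $1 \leq i \leq d-1$, since each internal layer is a vertex cut separating $v_0$ from $v_d$. Extremality forces $n_d = 1$: if some $w \in V_d \setminus \{v_d\}$ existed, moving $w$ to any internal layer would keep the diameter at $d$ (because $v_d$ remains at distance $d$ from $v_0$), preserve $k$-connectivity (since internal layer sizes only grow), and strictly raise $\lambda_\alpha$ by a test-vector comparison against the Perron vector of $G$, contradicting extremality. Writing the order as $n = 2 + \sum_{i=1}^{d-1} n_i$, an analogous transfer lemma shows that if any two internal layers both have size $> k$, moving one unit of slack from one to the other strictly increases $\lambda_\alpha$; hence in the extremal graph exactly one internal layer carries the surplus $n - 2 - (d-2)k$ while all other internal layers have size exactly $k$.

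The main obstacle I anticipate is the final step: identifying the position of the oversized internal clique. Let $G_j := K_1 \vee K_{n_1} \vee \cdots \vee K_{n_{d-1}} \vee K_1$ with $n_j = n - 2 - (d-2)k$ and $n_i = k$ for $i \neq j$. Since each $G_j$ is a sequential join of cliques, its Perron vector is constant on each layer, so $\lambda_\alpha(G_j)$ equals the largest eigenvalue of an explicit $(d+1)\times(d+1)$ weighted tridiagonal quotient matrix $B_j$ whose entries depend only on $\alpha$ and the $n_i$'s. The symmetry $i \mapsto d-i$ of the sequential join gives $\lambda_\alpha(G_j) = \lambda_\alpha(G_{d-j})$, and I would prove $\lambda_\alpha(G_j) < \lambda_\alpha(G_{j+1})$ for $j < \lfloor d/2 \rfloor$ by a sign analysis of $\det(\lambda I - B_{j+1}) - \det(\lambda I - B_j)$ at $\lambda = \lambda_\alpha(G_j)$, relying on the unimodal increase of the Perron coordinates along the path toward the middle. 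Handling this inequality uniformly for all $\alpha \in [0,1)$ and over both parities of $d$ is where the bulk of the technical work lies, and it extends the adjacency ($\alpha = 0$) and signless Laplacian ($\alpha = \tfrac12$) cases recorded in Theorem \ref{H-main}.
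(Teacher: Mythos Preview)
First, note that Theorem~\ref{H-main} is quoted from \cite{Huang-1,Huang-2} and not proved in this paper; what the paper actually proves is the $A_\alpha$-generalization Theorem~\ref{th-main} (under the extra hypothesis $n_{\lfloor d/2\rfloor}\ge 2k$), so the relevant comparison is with Section~\ref{main}.

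Your outline matches the paper's architecture: (i) reduce the extremal graph to a sequential join of cliques, (ii) show at most one internal block exceeds size $k$, (iii) locate that block at $\lfloor d/2\rfloor$. For (i) the paper simply invokes Ore's theorem (Lemma~\ref{d-critical}), which already forces both end blocks to be $K_1$; your BFS argument rederives this but then needs a separate step for $n_d=1$. Your claim that moving $w\in V_d\setminus\{v_d\}$ to \emph{any} internal layer raises $\lambda_\alpha$ is too strong; what works cleanly is moving $w$ into $V_{d-1}$, since that only \emph{adds} the edges $w\!-\!V_{d-2}$.

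The genuine gap is step (ii). Your ``transfer lemma'' --- that whenever two internal layers exceed size $k$, shifting one vertex between them strictly increases $\lambda_\alpha$ --- cannot hold in both directions, and pinning down the correct direction and proving the Rayleigh-quotient gain is exactly the content of Lemma~\ref{th-1}, by far the longest argument in the paper. The authors do \emph{not} move one vertex at a time; they locate the three layers $V_a,V_b,V_c$ with largest Perron entries, relocate all surplus vertices to be adjacent to those three simultaneously, and then perform further edge-shifts (Cases~1.1--1.2, 2.1--2.3, with separate handling when $0$ or $d$ lies in $\{a,b,c\}$) to restore a graph in $\mathcal G_1$. You have underestimated this step by calling the localization the ``main obstacle.''

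For (iii) your quotient-matrix route is genuinely different. The paper never compares $\det(\lambda I-B_{j+1})$ with $\det(\lambda I-B_j)$; instead it proves monotonicity and cross-inequalities among the layer Perron entries (Lemmas~\ref{th-2}--\ref{th-3}, via the recurrence $(1-\alpha)k\,x_{i+1}+(2k\alpha+k-1-\lambda_\alpha)x_i+(1-\alpha)k\,x_{i-1}=0$ together with $\lambda_\alpha>3k-1$, which is where $n_l\ge 2k$ enters), and then exhibits one edge-shift $G(l,d-l)\to G(l-1,d-l+1)$ raising $\lambda_\alpha$ whenever $l\ge d-l+2$. Your determinant approach is viable, but the ``unimodal increase of the Perron coordinates'' you plan to rely on is precisely what Lemmas~\ref{th-2}--\ref{th-3} establish, so you will likely end up proving their analogues anyway.
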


Furthermore, Huang, Li and Shiu \cite{Huang-1} believed
that the results of Theorem \ref{H-main} also hold for
$A_\alpha$ spectral radius in $\mathcal{G}_{n,k}^d$ with $\alpha\in[0,1)$. For $d=1$, it is obvious that $K_n$ is the unique graph with the maximum $A_\alpha$ spectral radius.
For $k=1$ and $d\geq 2$, Xue et al. \cite{Xuejie-1} characterized that $K_{n-d}(\lfloor\frac{d}{2}\rfloor,\lceil\frac{d}{2}\rceil)$ is the unique graph with the maximum $A_\alpha$ spectral radius, where $K_{n-d}(\lfloor\frac{d}{2}\rfloor,\lceil\frac{d}{2}\rceil)$ is the graph obtained from $K_{n-d}$ by connecting its all vertices to an end vertex of $P_{\lfloor\frac{d}{2}\rfloor}$ and an end vertex of $P_{\lceil\frac{d}{2}\rceil}$. In this paper, we confirm Huang, Li and Shiu's conjecture for all $d\geq 2$ and $k\geq 2$. Our conclusion is as follows.
\begin{theorem}\label{th-main}
For $k\geq 2$ and $d\geq 2$, the graph $G=K_1\vee K_{n_1}\vee \cdots \vee K_{n_{d-1}}\vee K_1$ attains the maximum $A_\alpha$ spectral radius in $\mathcal{G}_{n,k}^d$, where $n_i=k$ for each $i\in\{1,2,\ldots,d-1\}\setminus\{\lfloor\frac{d}{2}\rfloor\}$, $n_{\lfloor\frac{d}{2}\rfloor}\geq 2k$. Further, $\frac{1}{2}\big(\alpha(n_{\lfloor\frac{d}{2}\rfloor}+2k)+\sqrt{\alpha^{2}(n_{\lfloor\frac{d}{2}\rfloor}+2k+1)^2+4(n_{\lfloor\frac{d}{2}\rfloor}+2k)(1-2\alpha)}\big)<\lambda_{\alpha}(G)$$<n_{\lfloor\frac{d}{2}\rfloor}+2k-1.$
\end{theorem}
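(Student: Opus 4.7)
The plan is to prove Theorem~\ref{th-main} in two stages: first, identify the extremal graph $G^*\in\mathcal{G}_{n,k}^d$ structurally through a chain of local transformations that strictly increase $\lambda_\alpha$, and then derive the numerical bounds on $\lambda_\alpha(G)$ for the resulting graph by a Rayleigh estimate together with the standard degree bound. The structural part parallels the treatments of the adjacency matrix and signless Laplacian in \cite{Huang-2,Huang-1}, with $A_\alpha$-specific adjustments at each step.

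Fix an extremal $G^*\in\mathcal{G}_{n,k}^d$, a diametral path $v_0 v_1 \cdots v_d$, and set $V_i = \{u\in V(G^*) : d_{G^*}(v_0,u) = i\}$ for $0 \le i \le d$. Strict monotonicity of $\lambda_\alpha$ under edge addition — if $e=uv$ is a non-edge of $G$, then for the positive Perron vector $X$ of $G$ one has $X^\top(A_\alpha(G+e)-A_\alpha(G))X = \alpha(x_u^2+x_v^2)+2(1-\alpha)x_u x_v > 0$, forcing $\lambda_\alpha(G+e) > \lambda_\alpha(G)$ — lets me show: (i)~each $G^*[V_i]$ is a clique, because adding an intra-layer edge leaves the layer index of every vertex unchanged; (ii)~consecutive layers $V_i, V_{i+1}$ are completely joined, for the same reason; and (iii)~no edges exist between non-adjacent layers, by the definition of the distance partition. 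Thus $G^*$ is a sequential join $K_{|V_0|}\vee K_{|V_1|}\vee\cdots\vee K_{|V_d|}$, and $k$-connectivity applied to the cut $V_i$ ($1 \le i \le d-1$) forces $|V_i|\ge k$.

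The remaining structural task — and the main obstacle — is to show $|V_0|=|V_d|=1$ together with the middle-concentration property $|V_i|=k$ for each $1\le i\le d-1$ with $i\ne\lfloor d/2\rfloor$. For each such claim I would construct a candidate $G'\in\mathcal{G}_{n,k}^d$ by relocating one vertex from an overpopulated layer to the target layer, verify that $G'$ still has diameter $d$ and is $k$-connected, and establish $\lambda_\alpha(G')>\lambda_\alpha(G^*)$ via the Rayleigh bound $\lambda_\alpha(G')\ge X^\top A_\alpha(G')X/X^\top X$ applied to the Perron vector $X$ of $G^*$. By the within-layer vertex-transitivity of the sequential join, $X$ takes a single value $x_i$ on each $V_i$, and the difference $X^\top(A_\alpha(G')-A_\alpha(G^*))X$ collapses to a finite polynomial expression in $x_0,x_1,\dots,x_d$. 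To see that this expression has the correct sign, one needs the unimodality of the sequence $(x_i)$ with peak at $i=\lfloor d/2\rfloor$, which I would derive directly from the three-term layer-wise form of the eigenequation~\eqref{AX}, exploiting that the peripheral layers have much smaller degree than $V_{\lfloor d/2\rfloor}$ to force the ratios $x_{i+1}/x_i$ to cross $1$ exactly at the middle.

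For the eigenvalue bounds on the extremal $G$, set $s:=n_{\lfloor d/2\rfloor}+2k$. The maximum degree of $G$ equals $s-1$, attained only on $V_{\lfloor d/2\rfloor}$; since $G$ is not regular (the endpoints $v_0,v_d$ have degree $k<s-1$), the standard bound $\lambda_\alpha(G)\le\Delta(G)$ with equality iff $G$ is regular gives $\lambda_\alpha(G)<s-1$ for $\alpha\in[0,1)$. For the lower bound I would apply the Rayleigh principle~\eqref{rayleigh} to a two-parameter test vector that assigns one constant on $V_{\lfloor d/2\rfloor}$, another on $V_{\lfloor d/2\rfloor-1}\cup V_{\lfloor d/2\rfloor+1}$, and zero elsewhere; computing $X^\top A_\alpha(G)X$ and $X^\top X$ via~\eqref{xax} and optimizing over the ratio of the two constants identifies the bound as the larger root of a $2\times 2$ characteristic equation, matching the stated expression. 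Strict inequality results from the fact that the true Perron vector of $G$ takes $d+1$ distinct layer values rather than two, so the test vector is not an eigenvector and the Rayleigh quotient on this two-dimensional subspace lies strictly below $\lambda_\alpha(G)$.
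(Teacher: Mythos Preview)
Your structural outline has a genuine gap at the key step. You plan to relocate vertices toward the middle layer and certify each move via the Rayleigh inequality, and you say the sign control ``needs the unimodality of the sequence $(x_i)$ with peak at $i=\lfloor d/2\rfloor$.'' But before the extremal structure is established, the Perron vector of a sequential join $K_1\vee K_{n_1}\vee\cdots\vee K_{n_{d-1}}\vee K_1$ does \emph{not} in general peak at $\lfloor d/2\rfloor$: the peak sits near whichever layer is currently largest. Indeed, the paper's Lemma~\ref{th-2} proves precisely that for a graph $G(l,d-l)$ with a single oversized layer at position $l$, the sequence $(x_i)$ is unimodal with peak at $l$, not at $\lfloor d/2\rfloor$. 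So the unimodality you invoke is either circular (it presupposes the conclusion) or simply false, and without it your one-vertex relocations cannot be shown to increase $\lambda_\alpha$.

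The paper's route is substantially more delicate. It first cites Ore's diameter-critical characterization (Lemma~\ref{d-critical}) to get $n_0=n_d=1$ for free, then devotes all of Lemma~\ref{th-1} to a multi-case edge-shifting argument---moving whole blocks of edges between layers of $H$, not single vertices---to reduce to graphs with \emph{exactly one} oversized layer $V_l$. Only after that reduction do Lemmas~\ref{th-2} and~\ref{th-3} establish the fine inequalities among the $x_i$ (monotonicity on each side of $l$, the comparison $n_lx_l>kx_{l-1}$, and the cross-comparisons $x_{d-i}$ versus $x_i$), and the final proof is a single edge-swap showing $\lambda_\alpha(G(l-1,d-l+1))>\lambda_\alpha(G(l,d-l))$ whenever $l\ge d-l+2$. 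Your proposal compresses this whole chain into a unimodality claim that does not hold at the stage where you need it, and it also skips the reduction from ``several large layers'' to ``one large layer'', which is the bulk of the paper's work.

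On the numerical bounds: the paper simply applies Nikiforov's star lower bound (Lemma~\ref{le:bound1}) and the degree upper bound (Lemma~\ref{le:bound2}) with $\Delta=n_{\lfloor d/2\rfloor}+2k-1$. Your two-parameter Rayleigh test vector on the middle three layers would actually produce a \emph{larger} lower bound than the stated one (the neighbouring $K_k$'s contribute extra edges that $K_{1,\Delta}$ lacks), so it would not ``match the stated expression'' as you assert, though it would of course still imply it.
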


The rest of this paper is structured as follows. We provide several useful Lemmas in Section \ref{pre} and present our proof of Theorem \ref{th-main} in Section \ref{main}.

\section{ Preliminaries}\label{pre}
In this section, we present some preliminary results, which will be used in Section \ref{main}.

A graph is called \emph{diameter critical} if its diameter decreases with any addition of an edge. The structure of a diameter critical $k$-connected graph was characterized by Ore \cite{Ore-1} in 1968.

\begin{lemma}[\cite{Ore-1}]\label{d-critical}
	Let $G$ be a graph in $\mathcal{G}_{n,k}^d$. If $G$ is a diameter critical graph, then $G\cong K_1\vee K_{n_1}\vee \cdots \vee K_{n_{d-1}}\vee K_1$, where $n_i\geq k$ for each $i=1,2,\ldots,d-1$.
\end{lemma}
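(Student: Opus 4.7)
The plan is to fix a pair of vertices $u, v \in V(G)$ with $d_G(u,v) = d$ and partition $V(G)$ by BFS from $u$ into layers $V_i = \{w : d_G(u,w) = i\}$ for $0 \leq i \leq d$. By construction $V_0 = \{u\}$, $v \in V_d$, and every edge of $G$ lies inside some $V_i$ or between consecutive $V_i$ and $V_{i+1}$. The task is to establish, under the diameter-critical hypothesis, that (a) $|V_d| = 1$; (b) each $V_i$ is a clique; (c) every vertex of $V_i$ is adjacent to every vertex of $V_{i+1}$; and (d) $|V_i| \geq k$ for $1 \leq i \leq d-1$. Together, facts (a)--(c) give the sequential join $K_1 \vee K_{n_1} \vee \cdots \vee K_{n_{d-1}} \vee K_1$.

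For (a), suppose there exists $v' \in V_d$ with $v' \neq v$ and pick any $z \in V_{d-2}$ (non-empty since $d \geq 2$). The BFS structure forces $v'z \notin E(G)$. In $G + v'z$, any $u,v$-path using the new edge has length at least $\min\{d_G(u,v') + 1 + d_G(z,v),\, d_G(u,z) + 1 + d_G(v',v)\} \geq \min\{d + 1 + 2,\, (d-2) + 1 + 1\} = d$. Hence $d_{G+v'z}(u,v) = d$, so the diameter of $G + v'z$ is still $d$, contradicting the diameter-critical hypothesis. Thus $|V_d| = 1$.

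For (b) and (c), I first establish the layering duality $d_G(u,w) + d_G(v,w) = d$ for every $w$ (so every vertex lies on some shortest $u,v$-path) by a similar edge-addition argument applied to a hypothetical off-spine vertex, using the critical hypothesis to rule it out. With this duality in hand, for any non-edge $xy$ with $x \in V_i$, $y \in V_j$ and $|i-j| \leq 1$, the inequalities $d_G(p,x) \geq |d_G(u,p) - i|$ and $d_G(y,q) \geq |d_G(u,q) - j|$, combined with a case analysis comparing $d_G(p,x) + 1 + d_G(y,q)$ against $d_G(p,q)$ for every pair $(p,q)$, imply that adding $xy$ cannot shorten any pairwise distance, again contradicting criticality. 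Hence every such non-edge is in fact an edge of $G$, so each $V_i$ is a clique and consecutive layers are completely joined.

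For (d), the structure from (a)--(c) shows that for each $1 \leq i \leq d-1$ the set $V_i$ is a $u,v$-vertex separator of $G$: deleting $V_i$ leaves no path from $V_0 \cup \cdots \cup V_{i-1}$ to $V_{i+1} \cup \cdots \cup V_d$. Since $G$ is $k$-connected, Menger's theorem forces $|V_i| \geq k$. The main obstacle will be the case analysis in proving the layering duality and the clique/full-join structure; in both steps the key is to rule out every potential shortcut created by the hypothetical non-edge, which requires a careful case split on the BFS-layer positions of the candidate shortcut's endpoints relative to the inserted edge.
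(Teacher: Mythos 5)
The paper itself gives no proof of this lemma --- it is imported from Ore's 1968 paper --- so there is nothing in-text to compare against; I am judging your argument on its own terms. Your overall route (BFS layers $V_i$ from one end of a diametral pair, plus the fact that adding any non-edge must strictly decrease the diameter) is the standard proof of Ore's theorem, and steps (a) and (d) are correct as written: in (a) the new edge $v'z$ leaves $d_{G+v'z}(u,v)=d$, so the diameter cannot have dropped, and in (d) each $V_i$ with $1\le i\le d-1$ separates $u$ from $v$, so $k$-connectivity forces $|V_i|\ge k$.

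Steps (b)--(c) have two concrete problems. First, the statement you propose to verify, that ``adding $xy$ cannot shorten any pairwise distance,'' is false and so cannot be the source of the contradiction: $xy$ is a non-edge, so $d_G(x,y)\ge 2$ while $d_{G+xy}(x,y)=1$, and distances from neighbours of $x$ to $y$ may drop as well. What criticality actually requires is only that the \emph{diameter} not drop, and for that it suffices to exhibit one pair still at distance $d$, namely $(u,v)$: for a non-edge $xy$ with $x\in V_i$, $y\in V_j$, $|i-j|\le 1$, the triangle inequality through $u$ gives $d_G(x,v)\ge d-i$ and $d_G(y,v)\ge d-j$, so
\[
d_{G+xy}(u,v)=\min\{d,\ i+1+d_G(y,v),\ j+1+d_G(x,v)\}\ \ge\ \min\{d,\ d+1-(j-i),\ d+1-(i-j)\}=d,
\]
which is the desired contradiction. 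Second, the ``layering duality'' $d_G(u,w)+d_G(w,v)=d$ is the one nontrivial ingredient of your plan and you do not actually prove it: ``a similar edge-addition argument applied to a hypothetical off-spine vertex'' does not identify which edge to add, and the argument that does work (add an edge from $w\in V_j$ to a vertex of $V_{j-2}$ and extract $d_G(w,v)\le d-j$ from the criticality condition applied to the pair $(u,v)$, with a separate treatment of $j=1$ via the uniqueness established in (a)) is not a routine repetition of step (a). Fortunately the duality is dispensable: the one-sided bounds $d_G(x,v)\ge d-i$ used in the display above already close the argument, so the cleanest repair is to delete that step entirely and run the clique/full-join argument directly against the pair $(u,v)$.
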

Following that, we present several results on the $A_\alpha$ spectral radius. In this paper, the vertices $u$ and $v$ of $G$ are said to be \textit{equivalent}, if there exists an automorphism $p$: $G\rightarrow G$ such that $p(u)=v$.

\begin{lemma}[\cite{Nik-1}]\label{Nik-complete}
	Let $G\cong K_n$. Then $\lambda_{\alpha}(G)=n-1$.
\end{lemma}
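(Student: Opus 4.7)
The plan is to verify directly that the all-ones vector $\mathbf{1}=(1,1,\ldots,1)^T$ is a positive eigenvector of $A_\alpha(K_n)$ with eigenvalue $n-1$, and then invoke the Perron--Frobenius theorem (already cited for the existence of the Perron vector in the introduction) to conclude that this is exactly the spectral radius.

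First I would observe that in $K_n$ every vertex has degree $n-1$, so $D(K_n) = (n-1)I_n$, and the adjacency matrix satisfies $A(K_n) = J_n - I_n$, where $J_n$ is the all-ones matrix. Substituting into the definition yields
$$A_\alpha(K_n) = \alpha(n-1)I_n + (1-\alpha)(J_n - I_n).$$

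Next I would test $\mathbf{1}$ in the eigenequation (\ref{AX}): for any vertex $v$ of $K_n$, taking $x_u = 1$ for all $u$ gives
$$\alpha\, d_{K_n}(v)\cdot 1 + (1-\alpha)\sum_{uv\in E(K_n)} 1 = \alpha(n-1) + (1-\alpha)(n-1) = n-1.$$
Hence $A_\alpha(K_n)\mathbf{1} = (n-1)\mathbf{1}$, so $n-1$ is an eigenvalue of $A_\alpha(K_n)$ with the strictly positive eigenvector $\mathbf{1}$.

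Finally, since $K_n$ is connected for $n\geq 2$, the matrix $A_\alpha(K_n)$ is nonnegative and irreducible, so by Perron--Frobenius its spectral radius is the unique eigenvalue admitting a positive eigenvector. Therefore $\lambda_\alpha(K_n) = n-1$. (The trivial case $n=1$ is immediate since $A_\alpha(K_1)$ is the $1\times 1$ zero matrix.) There is no real obstacle here; the statement reduces to a one-line computation, with the only conceptual ingredient being the Perron--Frobenius identification of the positive-eigenvector eigenvalue as the spectral radius.
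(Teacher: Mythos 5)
Your proposal is correct and is the standard argument; the paper itself gives no proof of this lemma, citing it directly from Nikiforov \cite{Nik-1}, so there is nothing internal to compare against. The only caveat is that your Perron--Frobenius step needs $\alpha\in[0,1)$ (and $n\geq 2$) for $A_\alpha(K_n)$ to be irreducible; at $\alpha=1$ the matrix degenerates to $(n-1)I_n$, where the claim is immediate anyway, or one can avoid the case split entirely by noting $A_\alpha(K_n)=(\alpha n-1)I_n+(1-\alpha)J_n$ has spectrum $\{n-1,\ \alpha n-1\}$ with $\alpha n-1\leq n-1$.
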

\noindent\begin{lemma}[\cite{Nik-1}]\label{le:g-uv} Let $G$ be a connected graph and $G_0$ be a proper subgraph of $G$. Then $\lambda_\alpha(G_0)<\lambda_{\alpha}(G)$ for $\alpha\in[0, 1)$.
\end{lemma}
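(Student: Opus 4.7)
My plan is to compare $\lambda_\alpha(G)$ and $\lambda_\alpha(G_0)$ by plugging a natural test vector into the Rayleigh quotient \eqref{rayleigh} and then using irreducibility of $A_\alpha(G)$ to upgrade to strict inequality. First I would pick a nonnegative unit eigenvector $X_0$ of $A_\alpha(G_0)$ corresponding to $\lambda_\alpha(G_0)$; this exists by Perron-Frobenius applied to the symmetric nonnegative matrix $A_\alpha(G_0)$, and if $G_0$ happens to be disconnected I take $X_0$ supported on a single component whose induced spectral radius equals $\lambda_\alpha(G_0)$. I then extend $X_0$ to a unit vector $X$ on $V(G)$ by setting $X(w)=0$ for every $w\in V(G)\setminus V(G_0)$.

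The central calculation is $X^T A_\alpha(G) X \geq \lambda_\alpha(G_0)$, which I would obtain from \eqref{xax} by a term-by-term comparison: since $G_0$ is a subgraph of $G$ we have $d_G(v)\geq d_{G_0}(v)$ for each $v\in V(G_0)$, and every edge of $E(G_0)$ is also an edge of $E(G)$ lying in $V(G_0)\times V(G_0)$, while $X$ vanishes outside $V(G_0)$, so each corresponding summand in $X^TA_\alpha(G)X$ dominates the matching summand in $X_0^TA_\alpha(G_0)X_0$. Combining this with \eqref{rayleigh} already gives $\lambda_\alpha(G)\geq X^T A_\alpha(G) X \geq \lambda_\alpha(G_0)$.

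To sharpen this to strict inequality I would split into two cases. If $X$ has at least one zero coordinate---which happens whenever $V(G_0)\subsetneq V(G)$ or $G_0$ is disconnected---then since $\alpha\in[0,1)$ the off-diagonal zero-pattern of $A_\alpha(G)$ coincides with that of $A(G)$, so the connectedness of $G$ makes $A_\alpha(G)$ irreducible, and Perron-Frobenius forces every eigenvector for its spectral radius to be strictly positive; hence $X$ cannot be such an eigenvector and the first Rayleigh inequality is strict, giving $\lambda_\alpha(G)>X^TA_\alpha(G)X\geq\lambda_\alpha(G_0)$. In the complementary case $X=X_0>0$ with $V(G_0)=V(G)$, the properness of $G_0$ produces an edge $uv\in E(G)\setminus E(G_0)$, and the extra degree and adjacency contributions $\alpha(X_0(u)^2+X_0(v)^2)+2(1-\alpha)X_0(u)X_0(v)>0$ make the second inequality strict instead. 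The main subtlety is the disconnected-$G_0$ case, where the Perron vector of $A_\alpha(G_0)$ need not be positive; the resolution is that precisely those zero coordinates prevent $X$ from being a Perron vector of the irreducible matrix $A_\alpha(G)$, and the restriction $\alpha<1$ (which is what makes $A_\alpha(G)$ irreducible) is what drives this whole dichotomy---consistent with the fact that the statement genuinely fails at $\alpha=1$, as shown for example by $G=C_4$, $G_0=P_4$.
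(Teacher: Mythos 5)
Your argument is correct. Note that the paper does not prove this lemma at all---it is imported verbatim from Nikiforov's paper \cite{Nik-1}---so there is no in-paper proof to compare against; your Rayleigh-quotient argument (extend a nonnegative top eigenvector of $A_\alpha(G_0)$ by zero, compare term by term via \eqref{xax}, then get strictness either from irreducibility of $A_\alpha(G)$ when the test vector has a zero coordinate or from the extra edge contribution when it is positive) is the standard proof and matches the source. The case analysis is complete, including the disconnected-$G_0$ subtlety, and your observation that $\alpha<1$ is exactly what makes $A_\alpha(G)$ irreducible correctly locates why the hypothesis is needed.
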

\noindent\begin{lemma}[\cite{Nik-1}]\label{le:g-u=g-v}
	Let $G$ be a connected graph. Let $u$ and $v$ be two equivalent vertices in $G$. If $X$ is the positive eigenvector of $A_\alpha(G)$ corresponding to $\lambda_\alpha(G)$, then $x_u=x_v$ for $\alpha\in[0,1]$.
\end{lemma}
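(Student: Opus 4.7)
The plan is to invoke the permutation matrix associated with the automorphism that identifies $u$ and $v$, and then exploit Perron--Frobenius to conclude that any rearrangement of the entries of the Perron vector must equal the Perron vector itself.

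Concretely, let $\sigma\colon G\to G$ be an automorphism with $\sigma(u)=v$, and let $P_\sigma$ be the $n\times n$ permutation matrix defined by $(P_\sigma)_{ij}=1$ iff $i=\sigma(j)$, so that $(P_\sigma X)_i=x_{\sigma^{-1}(i)}$ for any vector $X$. The first step is to verify that $P_\sigma$ commutes with $A_\alpha(G)$. Because $\sigma$ preserves the adjacency relation, $P_\sigma A(G)P_\sigma^T=A(G)$; because $\sigma$ also preserves vertex degrees, $P_\sigma D(G)P_\sigma^T=D(G)$. Taking the convex combination gives $P_\sigma A_\alpha(G)=A_\alpha(G)P_\sigma$, which is the key algebraic identity.

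Next, apply this commutation to the Perron eigenvector. Since
\begin{equation*}
A_\alpha(G)(P_\sigma X)=P_\sigma A_\alpha(G)X=\lambda_\alpha(G)(P_\sigma X),
\end{equation*}
the vector $P_\sigma X$ is also a positive eigenvector of $A_\alpha(G)$ for $\lambda_\alpha(G)$. For $\alpha\in[0,1)$, the matrix $A_\alpha(G)$ is entrywise nonnegative with off-diagonal support equal to that of $A(G)$; connectedness of $G$ therefore makes $A_\alpha(G)$ irreducible, so by the Perron--Frobenius theorem the eigenspace of $\lambda_\alpha(G)$ is one-dimensional. Consequently $P_\sigma X=cX$ for some scalar $c>0$. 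Since $P_\sigma$ merely permutes coordinates, $P_\sigma X$ and $X$ have identical multisets of entries (in particular equal Euclidean norms), forcing $c=1$. Reading off coordinates, $x_{\sigma(w)}=x_w$ for every $w\in V(G)$; specialising to $w=u$ yields $x_v=x_u$.

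The only delicate point is the boundary case $\alpha=1$, where $A_\alpha(G)=D(G)$ is diagonal and Perron--Frobenius does not apply in the usual form. There the hypothesis that $X$ is a positive eigenvector forces $d_G(w)=\lambda_1(G)$ for every vertex $w$, hence $G$ is regular, and the eigenequation for $D(G)$ imposes no relation among coordinates; however, the conclusion $x_u=x_v$ is only meaningful modulo the ambient normalisation of $X$, so one handles this case separately either by treating $\alpha=1$ as the limit of $\alpha\uparrow1$ along the one-dimensional Perron ray chosen for $\alpha<1$, or by observing that any such $X$ can be replaced by its $\sigma$-symmetrised counterpart without leaving the eigenspace. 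I expect the main (in fact only) subtlety to be writing down this $\alpha=1$ clarification cleanly; all the substantive content lives in the commutation identity $P_\sigma A_\alpha(G)=A_\alpha(G)P_\sigma$ plus Perron--Frobenius for $\alpha<1$.
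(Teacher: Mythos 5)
The paper does not prove this lemma at all: it is imported verbatim from Nikiforov's paper \cite{Nik-1} as a quoted auxiliary result, so there is no in-paper argument to compare yours against. That said, your proof is the standard and correct one for $\alpha\in[0,1)$: the commutation identity $P_\sigma A_\alpha(G)=A_\alpha(G)P_\sigma$ follows exactly as you say from $\sigma$ preserving adjacency and degrees, $A_\alpha(G)$ is nonnegative and irreducible for connected $G$ when $\alpha<1$ (its off-diagonal support is $(1-\alpha)A(G)$), and simplicity of the Perron root forces $P_\sigma X=X$ after the norm comparison. Your worry about $\alpha=1$ is legitimate but is a defect of the statement rather than of your argument: when $A_1(G)=D(G)$ a positive eigenvector exists only for regular $G$, and then every positive vector is an eigenvector, so the conclusion is vacuous or fails depending on how ``the Perron vector'' is normalised. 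Since the paper only ever invokes this lemma for $\alpha\in[0,1)$ (its main theorem is stated for that range), the endpoint issue is harmless, and your proof covers everything that is actually used.
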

\noindent\begin{lemma}[\cite{Nik-1}]\label{le:bound1}
	Let $G$ be a graph with maximum degree $\Delta( G)=\Delta$. If $\alpha\in[0,1)$, then $$\lambda_{\alpha}(G)\geq \frac{1}{2}\big(\alpha(\Delta+1)+\sqrt{\alpha^{2}(\Delta+1)^2+4\Delta(1-2\alpha)}\ \big).$$
	If $G$ is connected, then equality holds if and only if $G\cong K_{1,\Delta}$. In particular,
	\begin{equation*}\label{eq:1.1}
	\lambda_\alpha(G)\geq
	\left\{
	\begin{array}{ll}
	\alpha(\Delta+1),&  \textrm{if $\alpha\in [0,\frac{1}{2}],$}\\
	\alpha\Delta+\frac{(1-\alpha)^2}{\alpha},&  \textrm{if $\alpha\in [\frac{1}{2},1).$}\\
	\end{array}
	\right.
	\end{equation*}
\end{lemma}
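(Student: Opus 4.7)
\emph{Phase 1 (Reduction to diameter critical graphs).} Let $G^\ast\in\mathcal{G}_{n,k}^{d}$ be an extremal graph. If some non-edge $uv$ of $G^\ast$ satisfies $d_{G^\ast+uv}\geq d$, then $G^\ast+uv\in\mathcal{G}_{n,k}^{d}$ (adding edges only increases connectivity), and Lemma~\ref{le:g-uv} applied to $G^\ast\subsetneq G^\ast+uv$ forces $\lambda_\alpha(G^\ast)<\lambda_\alpha(G^\ast+uv)$, contradicting extremality. Hence adding \emph{any} edge strictly decreases the diameter, i.e.\ $G^\ast$ is diameter critical. Lemma~\ref{d-critical} then pins down its shape:
$G^\ast\cong K_1\vee K_{n_1}\vee\cdots\vee K_{n_{d-1}}\vee K_1$ with $n_i\geq k$ and $\sum n_i=n-2$.

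\emph{Phase 2 (Locating the extra vertices in the middle clique).} Let $X$ be the Perron vector of $A_\alpha(G^\ast)$. Since within each block $K_{n_i}$ the vertices are pairwise equivalent, Lemma~\ref{le:g-u=g-v} implies $X$ is constant on each clique; call the common values $y_0,y_1,\ldots,y_d$. Writing the eigen-equation \eqref{AX} for a representative vertex of the $i$-th clique yields the tridiagonal system
\begin{equation*}
\bigl(\lambda-\alpha(n_{i-1}+n_i-1+n_{i+1})-(1-\alpha)(n_i-1)\bigr)y_i=(1-\alpha)\bigl(n_{i-1}y_{i-1}+n_{i+1}y_{i+1}\bigr),
\end{equation*}
with $n_0=n_d=1$. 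Using the palindrome automorphism available whenever $n_i=n_{d-i}$ and Lemma~\ref{le:g-u=g-v}, one first proves that the Perron weights $y_i$ are unimodal and peak near the middle. The core step is a \emph{vertex-shift} lemma: if some $n_j>k$ with $j\neq\lfloor d/2\rfloor$, form $G'$ by deleting a vertex from the $j$-th clique and inserting it into the $\lfloor d/2\rfloor$-th clique (possible without violating $k$-connectivity or the diameter). One then computes the difference $X^{T}A_\alpha(G')X-X^{T}A_\alpha(G^\ast)X$ using \eqref{xax}, splitting the change into edges lost at position $j$ and edges gained at the middle; unimodality of $X$ makes this difference strictly positive, and the Rayleigh-quotient identity \eqref{rayleigh} gives $\lambda_\alpha(G')>\lambda_\alpha(G^\ast)$, contradicting extremality. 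Therefore $n_i=k$ for every $i\neq\lfloor d/2\rfloor$ and $n_{\lfloor d/2\rfloor}=n-2-(d-2)k$.

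\emph{Phase 3 (The two-sided estimate).} For the upper bound, the maximum degree of $G$ is $\Delta=n_{\lfloor d/2\rfloor}+2k-1$, attained by the middle clique. Because the two pendant vertices have degree $k<\Delta$, $G$ is connected and non-regular; a standard argument on the maximum Perron coordinate (apply \eqref{AX} at a vertex $v^\ast$ maximizing $x_{v^\ast}$ and propagate equality along a spanning walk) yields $\lambda_\alpha(G)<\Delta=n_{\lfloor d/2\rfloor}+2k-1$. For the lower bound I would feed a carefully tuned two-valued test vector into \eqref{rayleigh}: assign weight $a$ to every vertex of the middle clique and weight $b$ to the $2k$ vertices of its two neighbouring cliques (and $0$ elsewhere), then optimize the $2\times 2$ symmetrised Rayleigh quotient over $(a,b)$. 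The resulting lower bound is the larger root of a quadratic whose coefficients depend only on $M:=n_{\lfloor d/2\rfloor}+2k$, and matches the claimed
$\tfrac{1}{2}\bigl(\alpha M+\sqrt{\alpha^{2}(M+1)^{2}+4M(1-2\alpha)}\bigr)$ after simplification. Strictness follows from the fact that $G$ properly contains the corresponding star-plus-clique subgraph witnessing this quadratic, combined with Lemma~\ref{le:g-uv}.

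\emph{Main obstacle.} The delicate part is the vertex-shift comparison in Phase~2: one must show \emph{simultaneously} that the Perron coordinates decrease monotonically toward the endpoints \emph{and} that the local Rayleigh-quotient change from the shift is positive, and this has to be valid for all admissible $(n_1,\ldots,n_{d-1})$ uniformly in $\alpha\in[0,1)$. Handling the boundary case $\alpha\to 1^{-}$ (where $A_\alpha$ degenerates to $D$) and the distinction between odd and even $d$ will require care; the main structural tool is the palindrome symmetry combined with the strict positivity of $X$ guaranteed by Perron--Frobenius.
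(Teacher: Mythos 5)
Your proposal does not prove the statement it was supposed to prove. Lemma~\ref{le:bound1} is a general lower bound on $\lambda_\alpha(G)$ for an \emph{arbitrary} graph $G$ in terms of its maximum degree $\Delta$, quoted in the paper from \cite{Nik-1}. What you have written is a proof plan for Theorem~\ref{th-main}, the extremal problem over $\mathcal{G}_{n,k}^d$: diameter-critical reduction, vertex shifting into the middle clique, and a two-sided estimate for the resulting join of cliques. None of that establishes the inequality $\lambda_{\alpha}(G)\geq \frac{1}{2}\big(\alpha(\Delta+1)+\sqrt{\alpha^{2}(\Delta+1)^2+4\Delta(1-2\alpha)}\big)$ for a general graph, the equality characterization $G\cong K_{1,\Delta}$ for connected graphs, or the two piecewise consequences. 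The closest you come is the aside in Phase~3 that a two-valued test vector produces a quadratic whose larger root is $\frac{1}{2}\big(\alpha M+\sqrt{\alpha^{2}(M+1)^{2}+4M(1-2\alpha)}\big)$; that is an \emph{application} of the lemma to one particular graph, not a proof of the lemma.

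The intended argument is short and entirely different in character. Let $v$ be a vertex with $d_G(v)=\Delta$; then $G$ contains the star $K_{1,\Delta}$ as a subgraph. Writing the eigen-system of $A_\alpha(K_{1,\Delta})$ in the two unknowns $x_c$ (center) and $x_\ell$ (common leaf value) gives $(\lambda-\alpha\Delta)(\lambda-\alpha)=(1-\alpha)^2\Delta$, so $\lambda_\alpha(K_{1,\Delta})$ is the largest root of $x^2-\alpha(\Delta+1)x+(2\alpha-1)\Delta=0$, which is exactly the claimed bound. Monotonicity of the Perron root under taking subgraphs (the strict version is the paper's Lemma~\ref{le:g-uv}) yields $\lambda_\alpha(G)\geq\lambda_\alpha(K_{1,\Delta})$ together with the equality case for connected $G$. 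The two displayed consequences then follow by estimating the discriminant: for $\alpha\in[0,\frac{1}{2}]$ the term $4\Delta(1-2\alpha)$ is nonnegative, so the square root is at least $\alpha(\Delta+1)$; for $\alpha\in[\frac{1}{2},1)$ one verifies $\alpha^{2}(\Delta+1)^2+4\Delta(1-2\alpha)\geq\big(\alpha(\Delta-1)+\frac{2(1-\alpha)^2}{\alpha}\big)^2$. If you want to submit a proof of this lemma, this is the route to take; the machinery in your three phases belongs to the proof of Theorem~\ref{th-main} instead.
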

\noindent\begin{lemma}\label{le:bound2} \rm(\cite{Nik-1})
	Let $G$ be a graph. If $\alpha\in [0,1]$, then
	$$
	\frac{2|E(G)|}{|V(G)|}\leq \lambda_\alpha(G)\leq \max_{uv\in E(G)}\{\alpha d_G(u)+(1-\alpha)d_G(v)\}.
	$$
	The first equality holds if and only if $G$ is regular.
\end{lemma}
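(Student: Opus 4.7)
The plan is to handle Theorem \ref{th-main} in three stages: first reduce to a narrow structural family, then pin down the optimal distribution of clique sizes within that family, and finally derive the stated bounds on $\lambda_\alpha(G)$.

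\emph{Stage 1 (reduction to diameter critical).} If $G^* \in \mathcal{G}_{n,k}^d$ maximizes $\lambda_\alpha$, then $G^*$ must be diameter critical: otherwise one could add some non-edge $uv$ to obtain a graph still in $\mathcal{G}_{n,k}^d$ (edge addition preserves $k$-connectivity and, by assumption, would not decrease the diameter below $d$) with strictly larger $A_\alpha$ spectral radius by Lemma \ref{le:g-uv}. Lemma \ref{d-critical} then forces $G^* \cong K_1 \vee K_{n_1} \vee \cdots \vee K_{n_{d-1}} \vee K_1$ with $n_i \geq k$ and $\sum_{i=1}^{d-1} n_i = n-2$.

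\emph{Stage 2 (locating the heavy layer).} Denote the layers by $V_0, V_1, \ldots, V_d$ with $|V_0| = |V_d| = 1$ and $|V_i| = n_i$. By Lemma \ref{le:g-u=g-v}, the Perron vector is constant on each $V_i$; write $x_i$ for this common value. Equation (\ref{AX}) yields the tridiagonal recurrence
\begin{equation*}
\lambda_\alpha x_i = \alpha(n_{i-1}+n_i-1+n_{i+1})x_i + (1-\alpha)\bigl[n_{i-1}x_{i-1}+(n_i-1)x_i+n_{i+1}x_{i+1}\bigr]
\end{equation*}
for $1 \leq i \leq d-1$, with the convention $n_0 = n_d = 1$. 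I would then prove, in turn, that (a) at most one $n_i$ exceeds $k$, and (b) this unique excess index is $\lfloor d/2 \rfloor$. For (a), if $n_i, n_j \geq k+1$ with $i \neq j$, move a single vertex from one of these layers to the other (permitted because $k$-connectivity is preserved when the donor still has $\geq k$ vertices), form the new graph $G'$, and apply the Rayleigh quotient (\ref{rayleigh}) with a suitable modification of $X$ as a test vector on $G'$ to obtain $\lambda_\alpha(G') > \lambda_\alpha(G^*)$. For (b), exploit the $i \leftrightarrow d-i$ symmetry together with a unimodality property of $(x_i)$ about the center, read off the tridiagonal recurrence, then shift the heavy layer one position toward the middle and compare Rayleigh quotients to conclude that the optimum lies at $\lfloor d/2 \rfloor$.

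\emph{Stage 3 (the two bounds).} Once the structure of $G$ is fixed, a vertex of the central clique $K_{n_{\lfloor d/2 \rfloor}}$ has degree $\Delta := n_{\lfloor d/2\rfloor}+2k-1$, which is the maximum degree of $G$. Because $G \not\cong K_{1,\Delta}$, Lemma \ref{le:bound1} applied to $G$ gives a strict lower bound of the shape stated in the theorem (the exact numerical constants appearing inside the radical arise from a small sharpening that incorporates the extra clique edges adjacent to the central vertex). For the upper bound, Lemma \ref{le:bound2} gives $\lambda_\alpha(G) \leq \max_{uv \in E(G)}\{\alpha d_G(u)+(1-\alpha)d_G(v)\} \leq \Delta$, and strict inequality holds because $G$ is not $\Delta$-regular (the two pendant $K_1$ apex vertices have degree $k<\Delta$), which rules out the equality case of the lemma.

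\emph{Main obstacle.} Stage 2 is the decisive step and the principal source of difficulty. Tracking the net change in the quadratic form $X^T A_\alpha X$ under a single-vertex move between layers requires careful bookkeeping of the edges gained and lost along with the degree shifts, and is complicated by the distinct degree formulas at the boundary layers $V_0, V_d$. A clean induction on the displacement of the heavy layer from $\lfloor d/2 \rfloor$, organized around the $i \leftrightarrow d-i$ symmetry and using the tridiagonal recurrence to extract the unimodality of $(x_i)$, should reduce these comparisons to a manageable base case.
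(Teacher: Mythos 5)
Your proposal does not address the statement you were asked to prove. The statement is Lemma \ref{le:bound2}, the pair of bounds
$\frac{2|E(G)|}{|V(G)|}\leq \lambda_\alpha(G)\leq \max_{uv\in E(G)}\{\alpha d_G(u)+(1-\alpha)d_G(v)\}$
together with the characterization of equality on the left. What you have written is instead a three-stage outline for Theorem \ref{th-main}; Lemma \ref{le:bound2} appears in your text only as a tool invoked in Stage 3, never as something being established. Nothing in the proposal derives either inequality, so as a proof of the stated lemma it is vacuous. (In the paper this lemma is quoted from Nikiforov's work and not reproved, but a blind proof attempt still has to argue it.)

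For the record, the lemma itself admits a short argument that your outline never touches. The lower bound follows from the Rayleigh quotient (\ref{rayleigh}) evaluated at the all-ones vector: by (\ref{xax}), $\mathbf{1}^TA_\alpha(G)\mathbf{1}=\alpha\sum_u d_G(u)+2(1-\alpha)|E(G)|=2|E(G)|$, so $\lambda_\alpha(G)\geq 2|E(G)|/|V(G)|$, with equality precisely when $\mathbf{1}$ is an eigenvector, i.e.\ when all row sums of $A_\alpha(G)$ are equal, i.e.\ when $G$ is regular. The upper bound requires a separate eigenvector argument (choosing a vertex maximizing $x_u$ and bounding the eigenvalue equation (\ref{AX}) along an edge incident to it, or an equivalent similarity/row-sum argument as in Nikiforov's paper). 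If you intend to submit a proof of Lemma \ref{le:bound2}, you need to supply an argument of this kind rather than an outline of the main theorem.
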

\noindent\begin{lemma}[\cite{Xuejie-1}]\label{le:g-uw+vw}
	Let $G$ be a connected graph and $X(G)=(x_1,x_2,\ldots,x_n)^T$ be a positive eigenvector of $A_{\alpha}(G)$ corresponding to $\lambda_{\alpha}(G)$. For two distinct vertices $u$, $v$ of $G$, suppose $Y\subseteq N_G(u)\setminus(N_G(v)\cup\{v\})$. Let $G^{'}=G-\{uw:w\in Y\}+\{vw:w\in Y\}$. If $Y\neq\emptyset$ and $x_v\geq x_u$, then $\lambda_{\alpha}(G^{'})>\lambda_{\alpha}(G)$ for $\alpha\in [0,1)$.
\end{lemma}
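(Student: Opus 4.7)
The plan is to pin down the structure of any extremal graph $G^*\in\mathcal{G}_{n,k}^d$, then derive the stated bounds. First I would argue that $G^*$ must be diameter-critical: adding any edge preserves $k$-connectivity, so if some edge addition left the diameter at $d$, the resulting graph would still lie in $\mathcal{G}_{n,k}^d$ and by Lemma \ref{le:g-uv} have strictly larger $A_\alpha$-spectral radius, contradicting extremality. Lemma \ref{d-critical} then forces $G^*\cong K_1\vee K_{n_1}\vee\cdots\vee K_{n_{d-1}}\vee K_1$ with $n_i\geq k$ for each $i$.

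Next, let $X$ be the Perron vector of $G^*$; by Lemma \ref{le:g-u=g-v}, $X$ takes a common value $x_\ell$ on each layer $V_\ell$. I would show that exactly one of the $n_i$'s strictly exceeds $k$, and that this large part sits at position $\lfloor d/2\rfloor$, by an iterative vertex-shifting argument. Namely, if $n_i, n_j>k$ with $i<j$, I form $G'\in\mathcal{G}_{n,k}^d$ by moving one vertex from $V_j$ to $V_i$ (or the reverse, depending on which of $x_i,x_j$ is larger); $G'$ is again a sequential join of the same form, so the diameter is preserved, and $k$-connectivity is preserved because the part being shrunk drops only from a value exceeding $k$. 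A direct computation of $X^T[A_\alpha(G')-A_\alpha(G^*)]X$—whose sign, by Rayleigh together with (\ref{rayleigh}) and (\ref{xax}), controls whether $\lambda_\alpha(G')>\lambda_\alpha(G^*)$—reveals positivity when the shift is made toward the part with the larger Perron value, giving the required contradiction. An analogous shift, combined with the symmetry $V_\ell\leftrightarrow V_{d-\ell}$, then forces the unique large part to occupy the middle index $\lfloor d/2\rfloor$.

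Finally, for the bounds: the upper bound follows from the standard row-sum inequality $\lambda_\alpha(G)\leq\Delta(G)$ for the non-negative matrix $A_\alpha(G)$ (strict since $G$ is non-regular; compare $d_G(u_0)=k$ at $u_0\in V_0$ to the much larger degree in $V_{\lfloor d/2\rfloor}$), together with $\Delta(G)\leq n_{\lfloor d/2\rfloor}+2k-1$. For the lower bound, I would apply Lemma \ref{le:bound1} to the star $K_{1,n_{\lfloor d/2\rfloor}+2k-1}$ sitting around a vertex $v^*\in V_{\lfloor d/2\rfloor}$ as a proper subgraph (invoking Lemma \ref{le:g-uv} for strictness), and then sharpen it via a Rayleigh-quotient estimate with a test vector supported on $V_{\lfloor d/2\rfloor-1}\cup V_{\lfloor d/2\rfloor}\cup V_{\lfloor d/2\rfloor+1}$, uniform on each part with two free weights to be optimized; the resulting scalar quadratic yields exactly the stated explicit expression. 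The hardest step will be verifying positivity of $X^T[A_\alpha(G')-A_\alpha(G^*)]X$ in the vertex-shifting argument, especially in boundary cases where $|i-j|\leq 2$ or where $i,j$ are near the end layers $V_0,V_d$, and extracting the monotonicity of $(x_\ell)$ with respect to distance from $\lfloor d/2\rfloor$ needed to fix the sign when shifting toward the center.
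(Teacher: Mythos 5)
Your proposal does not address the statement it is supposed to prove. The statement is Lemma \ref{le:g-uw+vw}: for an arbitrary connected graph $G$, two vertices $u,v$ with $x_v\geq x_u$, and a nonempty set $Y\subseteq N_G(u)\setminus(N_G(v)\cup\{v\})$, the edge shift $G'=G-\{uw:w\in Y\}+\{vw:w\in Y\}$ strictly increases the $A_\alpha$ spectral radius for $\alpha\in[0,1)$. What you have written instead is an outline of the proof of Theorem \ref{th-main} — the classification of the extremal graph in $\mathcal{G}_{n,k}^d$ and the bounds on its $A_\alpha$-index. Nothing in your text mentions the vertices $u,v$, the set $Y$, or the graph $G'$ of the lemma, and the lemma itself makes no reference to $k$-connectivity, diameter, or extremality. (In the paper this lemma is quoted from Xue et al.\ and used as a tool in that classification, which may be the source of the confusion, but the task was to prove the tool, not the application.)

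For the record, the lemma is proved by a Rayleigh-quotient computation followed by a strictness argument. Writing $X=X(G)$ and using (\ref{xax}) and (\ref{rayleigh}),
\begin{align*}
\lambda_\alpha(G')-\lambda_\alpha(G)\;\geq\; X^T\bigl(A_\alpha(G')-A_\alpha(G)\bigr)X
\;=\;\sum_{w\in Y}\Bigl[2(1-\alpha)x_w(x_v-x_u)+\alpha\bigl(x_v^2-x_u^2\bigr)\Bigr]\;\geq\;0,
\end{align*}
since $x_v\geq x_u>0$ and $x_w>0$. To upgrade this to a strict inequality one assumes equality and notes that then $X$ would also be a Perron vector of $A_\alpha(G')$; comparing the eigenequation (\ref{AX}) at the vertex $u$ in $G$ and in $G'$ (where $u$ has lost the positive contributions $\sum_{w\in Y}x_w$ and, for $\alpha>0$, part of its degree term) forces $\lambda_\alpha(G')x_u<\lambda_\alpha(G)x_u$, a contradiction with $\lambda_\alpha(G')\geq\lambda_\alpha(G)$ and $x_u>0$. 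None of this appears in your proposal, so as a proof of the stated lemma it has to be counted as missing entirely.
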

In order to prove our main results, we generalize Lemma \ref{le:g-uw+vw}, our conclusion is shown in Lemma \ref{v-u}. In addition, we characterize the $A_\alpha$ spectral radius about the sequential join $K_{n_1}\vee K_{n_2}\vee K_{n_3}$ of three complete graphs $K_{n_1}$, $K_{n_2}$, and $K_{n_3}$, the results are shown in Lemma \ref{bound}.
\begin{lemma}\label{v-u}
	Let $G$ be a connected graph and  $X(G)=(x_1,x_2,\ldots,x_n)^T$ be a positive eigenvector of $A_{\alpha}(G)$ corresponding to $\lambda_{\alpha}(G)$. Let $U$ and $V$ be two disjoint subsets of $V(G)$  that satisfy $|U|=|V|=k$. For each pair of vertices $u_1$, $u_2\in U$ $($resp. $v_1$, $v_2\in V$$)$ are equivalent in $G$, then $x_{u_1}=x_{u_2}$ $($resp. $x_{v_1}=x_{v_2}$$)$. Suppose $W=N_G(U)\setminus \{N_G(V)\cup V\}$. Let $G'$ be a graph from $G$ by deleting all edges between $U$ and $W$ and adding all edges between $V$ and $W$. If $x_{v_1}\geq x_{u_1}$, then $\lambda_{\alpha}(G')>\lambda_{\alpha}(G)$.
\end{lemma}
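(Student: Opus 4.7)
The plan is to apply the Rayleigh characterization (\ref{rayleigh}) using $X$ as a test vector for $A_\alpha(G')$. Two facts together yield the strict inequality: (i) $X^{T}A_\alpha(G')X \geq X^{T}A_\alpha(G)X$, and (ii) $X$ is not an eigenvector of $A_\alpha(G')$ corresponding to $\lambda_\alpha(G)$. Since (\ref{rayleigh}) becomes strict whenever the test vector fails to lie in the top eigenspace, (i) and (ii) together imply $\lambda_\alpha(G')>\lambda_\alpha(G)$. I would also assume $W\neq\emptyset$; otherwise $G'=G$ and there is nothing to prove.

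For the bookkeeping, write $x_U$ and $x_V$ for the common Perron entries on $U$ and $V$ (valid by Lemma \ref{le:g-u=g-v} and the equivalence hypothesis), and set $S:=\sum_{w\in W}x_w>0$. Let $e(U,W)$ denote the number of $U$-$W$ edges in $G$; since $|U|=k$, we have $|N_G(w)\cap U|\leq k$ for every $w\in W$, whence $e(U,W)\leq k|W|$. Because $V\cap W=\emptyset$ and $N_G(V)\cap W=\emptyset$, the transformation adds exactly $k|W|$ new edges (between $V$ and $W$) and deletes exactly $e(U,W)$ edges (between $U$ and $W$). Applying (\ref{xax}) and grouping by common Perron entries, I obtain
\begin{align*}
X^{T}A_\alpha(G')X-X^{T}A_\alpha(G)X
&=\alpha\bigl[k|W|\,x_V^{2}-e(U,W)\,x_U^{2}\bigr]+\alpha\sum_{w\in W}\bigl(k-|N_G(w)\cap U|\bigr)x_w^{2}\\
&\quad+2(1-\alpha)\Bigl[k\,x_V S-x_U\sum_{w\in W}|N_G(w)\cap U|\,x_w\Bigr].
\end{align*}
Each of the three brackets is non-negative: the first by $x_V\geq x_U$ combined with $e(U,W)\leq k|W|$; the second by $|N_G(w)\cap U|\leq k$; and the third by rewriting it as $k(x_V-x_U)S+x_U\sum_{w\in W}(k-|N_G(w)\cap U|)x_w$. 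This gives (i).

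If at least one bracket is strictly positive then $X^{T}A_\alpha(G')X>\lambda_\alpha(G)X^{T}X$, and (\ref{rayleigh}) already yields the strict conclusion. Otherwise every bracket vanishes, which forces $x_V=x_U$ together with $|N_G(w)\cap U|=k$ for every $w\in W$; equivalently, each $w\in W$ is adjacent to all of $U$ in $G$, so $|N_G(u)\cap W|=|W|$ for every $u\in U$. In this regime I would plug $X$ and $\lambda_\alpha(G)$ into the putative $G'$-eigenequation (\ref{AX}) at some $u\in U$ and subtract the genuine $G$-eigenequation at the same $u$; the residual equals $\alpha|W|\,x_U+(1-\alpha)S$, which is strictly positive since $\alpha\in[0,1)$, $W\neq\emptyset$, and $X>0$. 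Thus $X$ cannot be an eigenvector of $A_\alpha(G')$ for $\lambda_\alpha(G)$, establishing (ii), and the Rayleigh strict inequality closes the argument.

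The main obstacle is precisely this degenerate equality regime, where the quadratic form comparison fails to be strict; the decisive maneuver is the eigenequation test at a vertex of $U$, which detects that stripping an entire complete $U$-to-$W$ bipartite block necessarily disrupts the $G$-eigenequation. The remaining computation in (\ref{xax}) is routine once the common entries $x_U,x_V$ and the identity $e(U,W)=\sum_{w\in W}|N_G(w)\cap U|$ are in place.
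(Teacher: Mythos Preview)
Your proof is correct and follows the same Rayleigh-quotient strategy as the paper's own argument. The paper compresses your quadratic-form computation into the single lower bound
\[
X^{T}\bigl(A_\alpha(G')-A_\alpha(G)\bigr)X \;\ge\; 2(1-\alpha)k\sum_{w\in W}x_w(x_{v_1}-x_{u_1})+\alpha|W|k(x_{v_1}^2-x_{u_1}^2),
\]
which is exactly what you obtain after discarding the non-negative terms $\alpha\sum_{w}(k-|N_G(w)\cap U|)x_w^{2}$ and $x_U\sum_{w}(k-|N_G(w)\cap U|)x_w$; so your bookkeeping is simply a more explicit version of theirs. The one place where you go further is the strict inequality: the paper dispatches this by citing ``a similar argument as the proof of Lemma~2.2 in \cite{Xuejie-1}'', whereas you carry it out directly via the eigenequation residual $\alpha|W|x_U+(1-\alpha)S>0$ at a vertex of $U$. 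That is a clean self-contained substitute for the external reference, and your observation that the lemma tacitly requires $W\neq\emptyset$ (paralleling the hypothesis $Y\neq\emptyset$ in Lemma~\ref{le:g-uw+vw}) is a worthwhile clarification.
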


\begin{proof}
	By Equations
	(\ref{xax}) and $(\ref{rayleigh})$, we have
	\begin{align*}
	\lambda_{\alpha}(G')-\lambda_\alpha(G)&\geq X(G)^T(A_\alpha(G')-A_\alpha(G))X(G)\\&\geq 2(1-\alpha)k\sum_{w\in W}x_w(x_{v_1}-x_{u_1})+\alpha |W|k(x_{v_1}^2-x_{u_1}^2)\geq 0.
	\end{align*}
	By a similar argument as the proof of Lemma 2.2 in \cite{Xuejie-1}, we can deduce that $\lambda_{\alpha}(G')>\lambda_\alpha(G)$. These complete the proof.
\end{proof}

\begin{lemma}\label{bound}
	Let $G = K_{n_1}\vee K_{n_2}\vee K_{n_3}$. Then $\lambda_{\alpha}(G)$ is the largest root of $f(x)=0$, where
	\begin{align*}
	f(x)=&x^3+[3-(\alpha+1)(n_1+n_3)-(2\alpha+1)n_2]x^2+[n_2\alpha^2(n_1+n_2+n_3)+\alpha(n_1+2n_2+\\&n_3)(n_1+n_2+n_3-2)+3-2(n_1+n_2+n_3)+n_1n_3]x+[ \alpha^2(n_1n_2(1-n_1-2n_2)+\\&n_2n_3(1-n_3-2n_2)+n_2^2(1-n_2))+\alpha((1-n_3)n_1^2+(3n_2+2n_3-4n_2n_3-n_3^2-1)n_1+\\&(2n_2+n_3)(n_2+n_3-1))+n_1n_3(n_2+1)+1-(n_1+n-2+n_3)].
	\end{align*}
		In Particular, we have the following two results.
	\begin{itemize}
		\item [\rm(i)] If $n_1=n_3$, then $\lambda_{\alpha}(G)=\frac{1}{2}[(n_1+n_2-2)+\alpha(2n_1+n_2)+g(x)]$, where
		\begin{align*}
		g(x)=\sqrt{(2n_1+n_2)^2\alpha^2-2(2n_1^2+ 5n_1n_2+n_2^2)\alpha+n_1^2+6n_1n_2+n_2^2}.
		\end{align*}
		\item[\rm(ii)]	If $n_1=n_2=n_3=k\ (k\in Z^+)$, then $\lambda_{\alpha}(G)=\frac{3\alpha+2+\sqrt{9\alpha^2-16\alpha+8}}{2}k-1$.
	\end{itemize}
	\begin{proof}
		Note that all vertices in $V(K_{n_1})$, all vertices in $V(K_{n_2})$ and all vertices in $V(K_{n_3})$ are equivalent, respectively. Let $X(G)=(\overbrace{x_1,\ldots,x_1}^{n_1},\overbrace{x_2,\ldots,x_2}^{n_2},\overbrace{x_3,\ldots,x_3}^{n_3})^T$ be the Perron vector corresponding to $\lambda_{\alpha}(G)$, where $x_i=x_{v^i_1}=x_{v^i_2}=\ldots=x_{v^i_{n_i}}$ for each $i=1,2,3$. By Equation (\ref{AX}), we have
\begin{equation}\label{x-A}
\left\{
\begin{array}{ll}
[\lambda_{\alpha}(G)-\alpha(n_1+n_2-1)-(1-\alpha)(n_1-1)]x_1-(1-\alpha)n_2x_2=0,\\
-(1-\alpha)n_1x_1+[\lambda_{\alpha}(G)-\alpha(n_1+n_2+n_3-1)-(1-\alpha)(n_2-1)]x_2-\\(1-\alpha)n_3x_3=0,\\
-(1-\alpha)n_2x_2+[\lambda_{\alpha}(G)-\alpha(n_2+n_3-1)-(1-\alpha)(n_3-1)]x_3=0.
\end{array}
\right.
\end{equation}
		
		Let $f(x)$ be the determinant of the coefficient matrix of the Equation (\ref{x-A}). Notice that $X(G)\neq 0$, by using the theory of solving homogeneous linear equations, then $\lambda_\alpha(G)$ is the largest root of $f(x)=0$, where
        \begin{align*}
        f(x)=&x^3+[3-(\alpha+1)(n_1+n_3)-(2\alpha+1)n_2]x^2+\\&[(\alpha^2n_2-2)(n_1+n_2+n_3)+\alpha n_1(n_1+3n_2+2n_3-2)]x+\\&[\alpha n_2(2n_2+3n_3-4)+\alpha n_3(n_3-2)+n_1n_3+3]x+\\&(\alpha-\alpha n_3-\alpha^2n_2)n_1^2+ [-2\alpha^2n_2^2 + (3\alpha + n_3 - 4\alpha n_3 + \alpha^2)n_2]n_1+\\&[(n_3 - 1)(\alpha-\alpha n_3 + 1)]n_1-(\alpha n_2 + \alpha n_3 - 1)(\alpha n_2 - 1)(n_2 + n_3 - 1).
        \end{align*}
        \begin{itemize}
        	\item [\rm(i)]If $n_1=n_3$, then the vertices in $V(K_{n_1})\cup V(K_{n_3})$ are equivalent in $G$, that is $x_1=x_3$. similarly we have
			\begin{equation*}\label{x-B}
		\left\{
		\begin{array}{ll}
		[\lambda_{\alpha}(G)-\alpha(n_1+n_2-1)-(1-\alpha)(n_1-1)]x_1-(1-\alpha)n_2x_2=0,\\
		-2n_1(1-\alpha)x_1+[\lambda_{\alpha}(G)-\alpha(2n_1+n_2-1)-(1-\alpha)(n_2-1)]x_2=0.
		\end{array}
		\right.
		\end{equation*}
    	
    Similarly, 
    according to the theory of solving homogeneous linear equations, by direct calculation, we have $\lambda_{\alpha}(G)=\frac{1}{2}[(n_1+n_2-2)+\alpha(2n_1+n_2)+g(x)]$, where
    \begin{align*}
    g(x)=\sqrt{(2n_1+n_2)^2\alpha^2-2(2n_1^2+ 5n_1n_2+n_2^2)\alpha+n_1^2+6n_1n_2+n_2^2}.
    \end{align*}

		\item [\rm(ii)] If $n_1=n_2=n_3=k\ (k\in Z^+)$, by direct calculation,  then
		\begin{align*}
	 \lambda_{\alpha}(G)=&\frac{1}{2}[(n_1+n_2-2)+\alpha(2n_1+n_2)+\\&\sqrt{(2n_1+n_2)^2\alpha^2-2(2n_1^2+ 5n_1n_2+n_2^2)\alpha+n_1^2+6n_1n_2+n_2^2}]\\=&\frac{3\alpha+2+\sqrt{9\alpha^2-16\alpha+8}}{2}k-1.
	 \end{align*}
		\end{itemize}
		  These complete the proof.
	\end{proof}	
\end{lemma}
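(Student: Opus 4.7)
\textbf{Proof proposal for Lemma \ref{bound}.} The plan is to exploit the block symmetry of $G=K_{n_1}\vee K_{n_2}\vee K_{n_3}$: within each factor $K_{n_i}$, any two vertices are interchangeable via an automorphism of $G$ (they have the same internal neighbors and are joined in the same way to $K_{n_{i-1}}$ and $K_{n_{i+1}}$). Applying Lemma \ref{le:g-u=g-v} to the Perron eigenvector $X$, every coordinate indexed by a vertex of $K_{n_i}$ takes a common value $x_i$, so $X$ is determined by the three scalars $x_1,x_2,x_3$.

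Next I would write the eigenequation (\ref{AX}) at a representative vertex of each block. A vertex of $K_{n_1}$ has degree $n_1+n_2-1$ and neighbors contributing $(n_1-1)x_1+n_2x_2$; similarly for the other two blocks. This produces the homogeneous $3\times 3$ system displayed in (\ref{x-A}). Since $X\neq 0$, the determinant $f(x)$ of its coefficient matrix must vanish at $x=\lambda_\alpha(G)$, and expanding the determinant along any row produces a cubic polynomial in $x$ whose coefficients are polynomials in $\alpha,n_1,n_2,n_3$; this gives the stated formula for $f(x)$. Because $\lambda_\alpha(G)$ is the Perron eigenvalue (hence the largest real eigenvalue of $A_\alpha(G)$) and the three roots of $f$ are exactly the eigenvalues of $A_\alpha(G)$ corresponding to the three-dimensional subspace of block-constant vectors, $\lambda_\alpha(G)$ must be the largest root of $f$.

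For part (i), when $n_1=n_3$ the map swapping the two outer blocks is an automorphism of $G$, so by Lemma \ref{le:g-u=g-v} again, $x_1=x_3$. Eliminating $x_3$ collapses (\ref{x-A}) into the $2\times 2$ system shown in the proof; the vanishing of its determinant is a quadratic in $x$, whose larger root is the displayed expression $\tfrac{1}{2}\big[(n_1+n_2-2)+\alpha(2n_1+n_2)+g(x)\big]$. Part (ii) is then an immediate substitution $n_1=n_2=k$ into the formula of part (i), followed by simplifying the discriminant to $(9\alpha^2-16\alpha+8)k^2$.

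The statement is essentially a structural observation plus bookkeeping, so the only real obstacle is the algebraic expansion of the $3\times 3$ determinant and the verification that the stated polynomial $f(x)$ matches it term by term; in particular, one should be careful with the mixed terms involving $\alpha^2 n_i n_j$. Beyond that computation, no nontrivial step is needed.
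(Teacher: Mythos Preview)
Your proposal is correct and follows essentially the same route as the paper: use the automorphism symmetry (Lemma \ref{le:g-u=g-v}) to reduce the Perron eigenvector to three block-values, write the eigenequations (\ref{AX}) as the $3\times 3$ system (\ref{x-A}), take the determinant to obtain the cubic $f(x)$, and then collapse to a $2\times 2$ system when $n_1=n_3$. Your remark that the roots of $f$ are the eigenvalues on the block-constant subspace is slightly more explicit than the paper's justification for why $\lambda_\alpha(G)$ is the \emph{largest} root, but otherwise the arguments coincide.
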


\begin{lemma}\label{bound-2}
		Let $G = K_{n_1}\vee K_{n_2}\vee K_{n_2}\vee K_{n_1}$. Then $\lambda_{\alpha}(G)=\frac{1}{2}[(\alpha(n_1+n_2)+n_1+2n_2-2)+\alpha(2n_1+n_2)+g(x)]$, where
		$g(x)=\sqrt{(\alpha-1)^2n_1^2+2\alpha(\alpha-1)n_1n_2+(\alpha-2)^2n_2^2}$.
		In Particular, if $n_1=n_2=k\ (k\in Z^+)$, then $\lambda_{\alpha}(G)=\frac{(2\alpha+3+\sqrt{4\alpha^2-8\alpha+5})}{2}k-1$.
\end{lemma}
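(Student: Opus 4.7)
The plan is to mirror the proof of Lemma \ref{bound}: exploit the automorphism group of $G = K_{n_1}\vee K_{n_2}\vee K_{n_2}\vee K_{n_1}$ to collapse the Perron vector onto two orbits, then solve the resulting $2\times 2$ characteristic equation.

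First, I would observe that $G$ admits an automorphism swapping the two outer $K_{n_1}$-factors (and simultaneously the two inner $K_{n_2}$-factors), and that all vertices within each of the four factors are pairwise equivalent. Hence, by Lemma \ref{le:g-u=g-v}, the Perron eigenvector $X$ assigns a single value $x_1$ to every vertex of the two outer cliques and a single value $x_2$ to every vertex of the two inner cliques.

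Next, I would apply the eigenequation (\ref{AX}) at a representative vertex of each orbit. An outer-clique vertex has degree $n_1+n_2-1$ with $n_1-1$ neighbors of value $x_1$ and $n_2$ of value $x_2$; an inner-clique vertex has degree $n_1+2n_2-1$ with $n_1$ neighbors of value $x_1$ and $2n_2-1$ of value $x_2$. Collecting the $\alpha$- and $(1-\alpha)$-contributions yields the homogeneous system
\begin{align*}
[\lambda_\alpha(G) - (n_1-1) - \alpha n_2]\,x_1 - (1-\alpha) n_2\, x_2 &= 0,\\
-(1-\alpha) n_1\, x_1 + [\lambda_\alpha(G) - (2n_2-1) - \alpha n_1]\, x_2 &= 0.
\end{align*}
Because $(x_1,x_2)\neq 0$, the theory of solving homogeneous linear equations forces $\lambda_\alpha(G)$ to be the larger root of the $2\times 2$ determinant, which is a quadratic in $\lambda_\alpha(G)$.

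Finally, I would apply the quadratic formula. The sum of the two diagonal entries is $\alpha(n_1+n_2)+n_1+2n_2-2$, and the only mildly tedious step is simplifying the discriminant: expanding $[(1-\alpha)n_1+(\alpha-2)n_2]^2 + 4(1-\alpha)^2 n_1 n_2$ and grouping the $n_1 n_2$ coefficient reveals it equal to $(\alpha-1)^2 n_1^2 + 2\alpha(\alpha-1) n_1 n_2 + (\alpha-2)^2 n_2^2$, which is precisely $g(x)^2$. Substituting $n_1 = n_2 = k$ then collapses the radical to $k\sqrt{4\alpha^2-8\alpha+5}$ and the trace to $(2\alpha+3)k-2$, delivering the stated closed form. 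No real obstacle remains beyond this algebraic bookkeeping; the proof is essentially a direct analogue of the computation in Lemma \ref{bound}(i), with the main difference being that the symmetry between the two middle cliques (rather than between outer pairs of equal size) is what reduces the problem from a $4\times 4$ to a $2\times 2$ system.
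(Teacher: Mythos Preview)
Your approach is essentially identical to the paper's: both exploit the automorphism exchanging the outer and inner clique pairs (via Lemma \ref{le:g-u=g-v}) to reduce the Perron eigenvector to two values, write down the same $2\times 2$ homogeneous system from (\ref{AX}), and solve the resulting quadratic. Your write-up in fact gives more detail on simplifying the discriminant than the paper, which simply says ``by direct calculation, our results can be obtained easily.''
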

\begin{proof}
Note that all vertices in $V(K_{n_1})$ and all vertices in $V(K_{n_2})$ are equivalent, respectively. Let $X(G)$ be the Perron vector corresponding to $\lambda_{\alpha}(G)$. By symmetry, we have $X(G)=(\overbrace{x_1,\ldots,x_1}^{n_1},\overbrace{x_2,\ldots,x_2}^{n_2},\overbrace{x_2,\ldots,x_2}^{n_2},\overbrace{x_1,\ldots,x_1}^{n_1})^T$. By using Equation (\ref{AX}), we have
\begin{equation*}\label{x-C}
\left\{
\begin{array}{ll}
[\lambda_{\alpha}(G)-\alpha(n_1+n_2-1)-(1-\alpha)(n_1-1)]x_1-(1-\alpha)n_2x_2=0,\\
-(1-\alpha)n_1x_1+[\lambda_{\alpha}(G)-\alpha(n_1+2n_2-1)-(1-\alpha)(2n_2-1)]x_2=0.
\end{array}
\right.
\end{equation*}
Similar to the proof of Lemma \ref{bound}, by direct calculation, our results can be obtained easily.
\end{proof}
\section{Proof of Theorem \ref{th-main}}\label{main}
 Let $G$ be the graph with the maximum $A_\alpha$ spectral radius among all graphs in $\mathcal{G}_{n,k}^d$. We call the graph $G$ is \textit{maximal} in $\mathcal{G}_{n,k}^d$.

According to Lemmas \ref{d-critical} and \ref{le:g-uv},
the maximal graph $G$ must be diameter critical. Since adding edges increases the $A_\alpha$ spectral radius.  We have $G\cong K_{n_0}\vee\cdots \vee K_{n_d}$, where $n_0=n_1=1$ and $n_i\geq k$ for each $i=1,2,\ldots,d-1$, as shown in Figure \ref{critical}. The vertices sets of the subgraph $K_{n_i}$ in $G$ are simply denoted as $V_i$ for each $i=0,1,\ldots,d$. Therefore, $\{V_0,V_1,\ldots,V_d\}$ is a partition of $V(G)$.

\begin{figure}[!h]
	\centering
	\includegraphics[width=80mm]{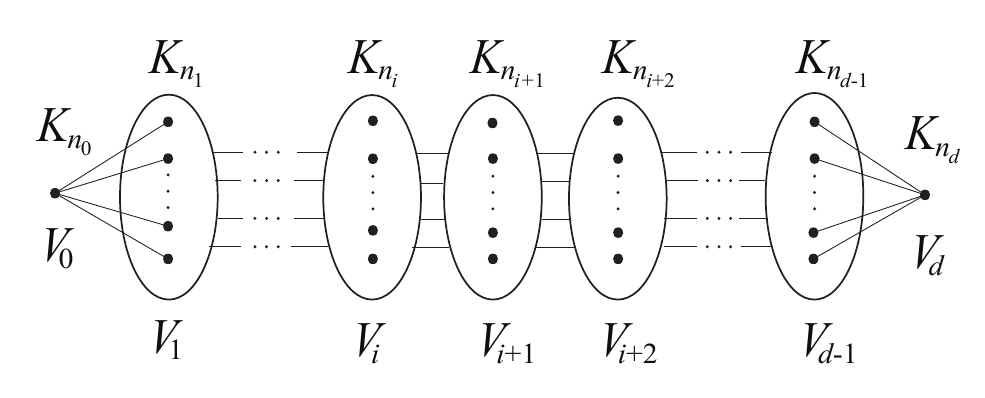}
	\caption{The graph $G=K_{n_0}\vee\cdots \vee K_{n_d}$}
	\label{critical}
\end{figure}
 Note that any two distinct vertices in $V_1$ (resp. $V_2,\ldots,V_{d-1}$) are equivalent.
 Let $X(G)$ be the Perron vector corresponding to $\lambda_\alpha(G)$. Then we have $X(G)=(x_0,\overbrace{x_1,\ldots,x_1}^{n_1},\ldots,\\\overbrace{x_{d-1},\ldots,x_{d-1}}^{n_{d-1}},x_d)^T$.
 \begin{figure}[!h]
 	\centering
 	\includegraphics[width=80mm]{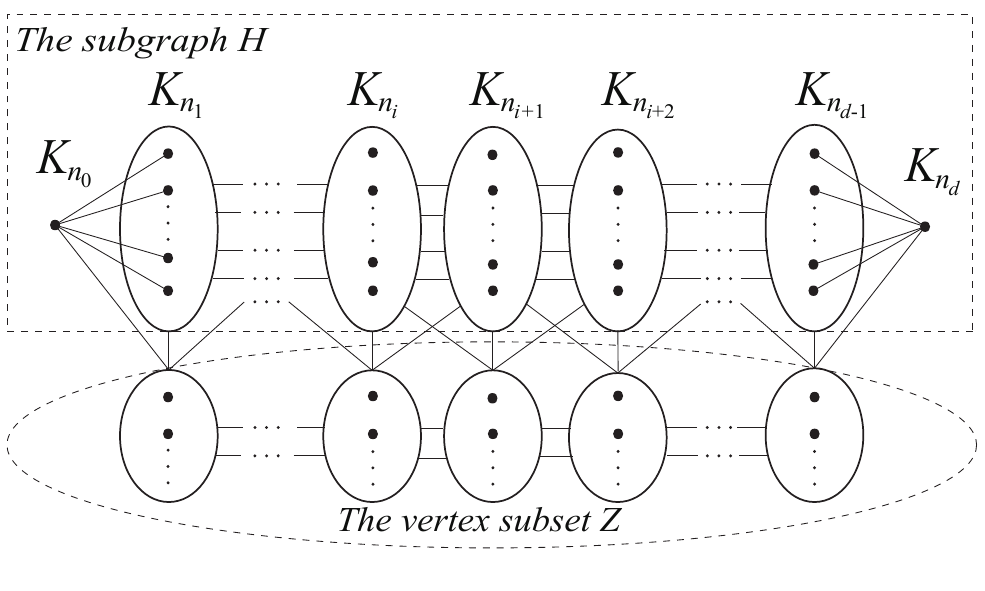}
 	\caption{The graph $G=K_{n_0}\vee\cdots \vee K_{n_d}$ with the subgraph $H$.}
 	\label{critical-H}
 \end{figure}

 Let $H\cong K_{1}\overbrace{\vee K_{k}\vee\cdots\vee K_{k}}^{d-1}\vee K_{1}$ be a subgraph of $G$ with vertex set $V(H)$. Let $Z=V(G)\setminus V(H)=\cup_{i=0}^{d}Z_i$, where $Z_i\subset V_i$, $i=0,1,\ldots,d$, as shown in Figure \ref{critical-H}.  Thus $\{V_0(H),V_1(H),\ldots,V_d(H)\}$ is a partition of $V(H)$, where $V_0(H)=V_0$, $V_d(H)=V_d$,
  $V_i(H)\subset V_i\subset V(G)$ and $|V_i(H)|=k$ for each $i=1,\ 2\ldots,d-1$. Similarly, $\{Z_0,Z_1,\ldots,Z_d\}$ is a partition of $Z$. It is possible that $Z_i=\emptyset$ for some $i\in[0,d]$, for instance $Z_0=Z_d=\emptyset$.

 Let $\mathcal{G}_1$ be the set of all graphs of order $n$, in which each graph is isomorphic to $K_{n_0}\vee\cdots \vee K_{n_d}$, where $n_0=n_d=1$, and there exists only one $j\in\{1,2,\ldots,d-1\}$ such that $n_j\geq k$ and $n_{i}=k$ for each $i\in\{2,\ldots,d-1\}\setminus\{j\}$. Obviously, $H\in\mathcal{G}_1\subseteq\mathcal{G}_{n,k}^d$.

Next, we keep the notations defined in the preceding section unless otherwise stated and prove several Lemmas.

\begin{lemma}\label{th-1}
Let $G$ be a maximal graph in $\mathcal{G}_{n,k}^d$. If $d\geq2$ and $k\geq 2$, then $G\in\mathcal{G}_1$.
\end{lemma}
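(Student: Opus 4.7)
I will argue by contradiction. As already noted in the text immediately preceding the lemma, Lemma \ref{d-critical} and Lemma \ref{le:g-uv} force any maximal $G$ in $\mathcal{G}_{n,k}^d$ to be diameter-critical, so $G \cong K_1 \vee K_{n_1} \vee \cdots \vee K_{n_{d-1}} \vee K_1$ with every $n_i \geq k$. If $G \notin \mathcal{G}_1$, there exist two distinct indices $i, j \in \{1, \ldots, d-1\}$ with $n_i \geq k+1$ and $n_j \geq k+1$. By Lemma \ref{le:g-u=g-v} the Perron vector $X$ of $A_\alpha(G)$ is constant on each $V_\ell$, say with value $x_\ell$; by relabeling, we may assume $x_i \leq x_j$.

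Define
\[
G' = K_1 \vee K_{n_1} \vee \cdots \vee K_{n_i - 1} \vee \cdots \vee K_{n_j + 1} \vee \cdots \vee K_1,
\]
that is, $G'$ is obtained from $G$ by ``moving'' a single vertex from $V_i$ into $V_j$. Since $n_i - 1 \geq k$ and every other class size is unchanged, the minimum vertex-cut between $V_0$ and $V_d$ is still $\geq k$, so $G'$ is $k$-connected; since the sequential-join partition into $d+1$ non-empty cliques is preserved, $G'$ still has diameter exactly $d$, hence $G' \in \mathcal{G}_{n,k}^d$. The plan is to show $\lambda_\alpha(G') > \lambda_\alpha(G)$, contradicting the maximality of $G$.

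To this end, let $X'$ be the class-constant vector on $V(G') = V(G)$ taking value $x_\ell$ on the $\ell$-th class of $G'$ (the only difference from $X$ is that the moved vertex takes value $x_j$ in $X'$ instead of $x_i$). Using $X'$ as a test vector in (\ref{rayleigh}), it suffices to show that $(X')^{T} A_\alpha(G') X' - \lambda_\alpha(G)\,(X')^{T} X' > 0$. Expanding this quadratic form explicitly (summing the adjacency and degree contributions on each class and on each adjacent pair of classes), substituting the eigenvalue equations $\lambda_\alpha(G)\,x_\ell = \alpha d_\ell x_\ell + (1-\alpha)\bigl[(n_\ell-1)x_\ell + n_{\ell-1} x_{\ell-1} + n_{\ell+1} x_{\ell+1}\bigr]$ at $\ell = i$ and $\ell = j$, and simplifying, the difference collapses to
\[
2\,x_j^{\,2} \;+\; \lambda_\alpha(G)\,(x_j^{\,2} - x_i^{\,2}) \;+\; \alpha\bigl(\Psi_j - \Psi_i\bigr),
\]
where $\Psi_\ell := n_{\ell-1}(x_{\ell-1}^{\,2} - x_\ell^{\,2}) + n_{\ell+1}(x_{\ell+1}^{\,2} - x_\ell^{\,2})$. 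The first two terms are manifestly non-negative, with the first being strictly positive, so the adjacency case $\alpha = 0$ is immediate.

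The main technical obstacle is controlling the possibly negative residual term $\alpha(\Psi_j - \Psi_i)$ when $\alpha \in (0,1)$. I plan to handle this by combining a Cauchy--Schwarz estimate $(n_{\ell-1} x_{\ell-1} + n_{\ell+1} x_{\ell+1})^2 \leq (n_{\ell-1} + n_{\ell+1})(n_{\ell-1} x_{\ell-1}^{\,2} + n_{\ell+1} x_{\ell+1}^{\,2})$ with the eigenvalue equation (which expresses $n_{\ell-1} x_{\ell-1} + n_{\ell+1} x_{\ell+1}$ as a multiple of $x_\ell$) to bound $\Psi_\ell$, and by a judicious choice of the pair $(i,j)$ (for instance, taking $i$ so that $x_i$ is minimal and $j$ so that $x_j$ is maximal among indices with $n_\ell \geq k+1$) so that the entire expression remains strictly positive. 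Two secondary subtleties are the adjacent case $j = i+1$, where the modifications to the quotient matrix at rows $i$ and $j$ overlap and the algebra must be redone analogously, and the boundary cases $i = 1$ or $j = d-1$, where $V_{i-1}$ or $V_{j+1}$ is a single vertex; each case yields the same strict inequality $\lambda_\alpha(G') > \lambda_\alpha(G)$, giving the desired contradiction and forcing $G \in \mathcal{G}_1$.
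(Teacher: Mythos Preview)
Your reduction is clean and the formula
\[
(X')^{T} A_\alpha(G')\,X' \;-\; \lambda_\alpha(G)\,(X')^{T}X' \;=\; 2x_j^{2} + \lambda_\alpha(G)\,(x_j^{2}-x_i^{2}) + \alpha\bigl(\Psi_j - \Psi_i\bigr)
\]
is correct (in the non-adjacent, non-boundary case). For $\alpha=0$ this gives the result immediately, and your single--vertex--shift idea is considerably more transparent than the paper's multi--case edge--shifting argument on the set $Z$.

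The genuine gap is the term $\alpha(\Psi_j-\Psi_i)$, which you identify but do not bound. This is not a minor technicality. Recall $\Psi_\ell = n_{\ell-1}(x_{\ell-1}^{2}-x_\ell^{2}) + n_{\ell+1}(x_{\ell+1}^{2}-x_\ell^{2})$; since the neighbouring class sizes $n_{\ell\pm1}$ are unrestricted (any of them may exceed $k$ when $G\notin\mathcal G_1$), $\Psi_j-\Psi_i$ can be of order $n_{\max}\cdot x_{\max}^{2}$, i.e.\ the same order as $\lambda_\alpha(G)(x_j^{2}-x_i^{2})$, and there is no a~priori reason it is dominated by $2x_j^{2}$. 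Your Cauchy--Schwarz idea, as stated, produces a \emph{lower} bound on $n_{\ell-1}x_{\ell-1}^{2}+n_{\ell+1}x_{\ell+1}^{2}$ via the eigen\-equation; but to control $\Psi_j-\Psi_i$ you need an \emph{upper} bound on $n_{j-1}x_j^{2}+n_{j+1}x_j^{2}$ relative to that sum (and the reverse at $i$), and the eigen\-equation does not supply this directly. Nor does choosing $x_j$ maximal and $x_i$ minimal among indices with $n_\ell>k$ help in an obvious way: precisely when $x_j$ is a local maximum of the profile one expects $\Psi_j<0$, and when $x_i$ is a local minimum one expects $\Psi_i>0$, so the sign of $\Psi_j-\Psi_i$ goes the wrong way. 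Until this term is actually bounded --- or a different choice of $(i,j)$ is exhibited for which it is provably harmless --- the argument is incomplete. The adjacent case $j=i+1$ and the boundary cases $i=1$, $j=d-1$ are likewise only flagged, not treated.

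By contrast, the paper avoids this difficulty altogether: it never compares $\Psi$-type quantities, but instead relocates all excess vertices in $Z$ simultaneously so that every individual edge swap is between a smaller and a larger Perron entry, making each contribution to $X^{T}(A_\alpha(G')-A_\alpha(G))X$ separately non-negative (via Lemma~\ref{v-u}). The price is a lengthy case analysis on the location of the three largest entries $x_a,x_b,x_c$; the payoff is that no delicate cancellation has to be controlled.
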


\begin{proof}
For $d=2$ and $k\geq 2$, it is obvious that $G\in\mathcal{G}_1$. Next we consider $d\geq 3$ and $k\geq 2$.

By contradiction, we suppose $G\notin\mathcal{G}_1$, then there exists at least two $j_1$,\ldots, $j_s\in\{1,2,\ldots,d-1\}$ such that $Z_{j_1},\ldots,Z_{j_s}\neq\emptyset$ $(s\geq 2)$ and $Z_i=\emptyset$ for each $i\in\{1,2,\ldots,d-1\}\setminus\{j_1,\ldots, j_s\}$, which also implies that $|Z|\geq 2$.

  For $d=3$ and $k\geq 2$, if $G\notin\mathcal{G}_1$,  then $Z_1\neq\emptyset$ and $Z_2\neq\emptyset$. Let $G'$ be a graph obtained from $G$ by deleting all edges between $Z_{1}$ and $V_{0}$ and adding all edges between $Z_{1}$ and $V_{d}$. Without loss of generality, we assume $x_d\geq x_0$. Since $|V_0|=|V_d|=1$, by using Lemma \ref{le:g-uw+vw},
we have $\lambda_\alpha(G')>\lambda_\alpha(G)$. Note that $G'\in\mathcal{G}_1\subseteq\mathcal{G}_{n,k}^d$, which contradicts the assumption that $G$ is the maximal graph in $\mathcal{G}_{n,k}^k$. Thus $G\in\mathcal{G}_1$.

Next we consider $d\geq 4$ and $k\geq 2$.
Let $x_a\geq x_b\geq x_c$ be three largest values in the set $\{x_i|i=0,1,\ldots,d\}$ of the graph $G$, where $a$, $b$ and $c$ are pairwise distinct.  Next we consider the following two cases: 
\begin{itemize}
\item[Case 1.]  $0$ or $d\notin\{a,b,c\}$.

	For any a vertex $v\in Z$, let $V_{a_v}(H),V_{b_v}(H),V_{c_v}(H)\in V(H)$ be three elements of the vertex partition set of $V(H)$, whose vertices are adjacent to $v$. It is possible that $\{a_v,b_v,c_v\}\cap\{a,b,c\}\neq\emptyset$. Because $\min\{x_a,x_b ,x_c\}\geq\max\{x_{a_v},x_{b_v},x_{c_v}\}$, without loss of generality, we assume $x_a\geq x_{a_v}$, $ x_b\geq x_{b_v}$ and $x_{c}\geq x_{c_v}$.
\begin{itemize}
		\item[Step 1.] Let $G_{1,1}$ be a graph obtained from $G$ by deleting all edges between $v$ and $V_{a_v}(H)$, $V_{b_v}(H)$, $V_{c_v}(H)$, respectively, and adding all edges between $v$ and $V_a(H)$, $V_b(H)$, $V_c(H)$, respectively.
		
		For $v\notin Z_1$ or $Z_{d-1}$, the graph $G$ and the graph $G_{1,1}$  ($v\notin Z_1$ or $Z_{d-1}$) is shown in Figure \ref{1.1} ($a$), ($b$), respectively. In Figure \ref{1.1} ($c$), we denote the edge-shifting operation from the graph $G$ to the graph $G_{1,1}$ ($v\notin Z_1$ or $Z_{d-1}$) as $G\rightarrow G_{1,1}$ ($v\notin Z_1$ or $Z_{d-1}$), where the thick dashed lines are the deleted edges in the graph $G$ and the thick solid lines are the added edges in the graph $G_{1,1}$.
		In Figures \ref{1.1}$-$\ref{g33-g34}, take Figure \ref{1.1} ($c$) as an example, we just show the edge-shifting operations between the corresponding two graphs.
	
		If $v\notin Z_1$ or $Z_{d-1}$, by using Equations (\ref{xax}) and (\ref{rayleigh}), then
		\begin{align*}
		&\lambda_{\alpha}(G_{1,1})-\lambda_{\alpha}(G)\\\geq& X(G)^T[A_\alpha(G_{1,1})-A_\alpha(G)]X(G)\\
		=& 2k(1-\alpha)x_v(x_a+x_b+x_c-x_{a_v}-x_{b_v}-x_{c_v})+\\
		&\alpha k(x_a^2+x_b^2+x_c^2-x_{a_v}^2-x_{b_v}^2-x_{c_v}^2)\geq 0.
		\end{align*}
		If $v\in Z_1$ or $Z_{d-1}$, by symmetry, we can suppose $v\in Z_1$. By using Equations (\ref{xax}) and (\ref{rayleigh}), then we have
		\begin{align*}
		&\lambda_{\alpha}(G_{1,1})-\lambda_{\alpha}(G)\\\geq& X(G)^T[A_\alpha(G_{1,1})-A_\alpha(G)]X(G)\\
		=& 2k(1-\alpha)x_v(x_a+x_b+x_c-x_{0}-x_{1}-x_{2})+\alpha k(x_a^2+x_b^2+x_c^2-x_{0}^2-x_{1}^2-x_{2}^2)+\\&2(k-1)\big((1-\alpha)x_vx_{0}+\frac{\alpha}{2} x_{0}^2+\frac{\alpha}{2}x_v^2\big)\geq 0.
		\end{align*}
		Therefore, $\lambda_{\alpha}(G_{1,1})>\lambda_{\alpha}(G)$.
			 \begin{figure}[!h]
			\centering	
			\subfigure[The graph $G$ ($v\notin Z_1$ or $Z_{d-1}$).]{\includegraphics[width=70mm]{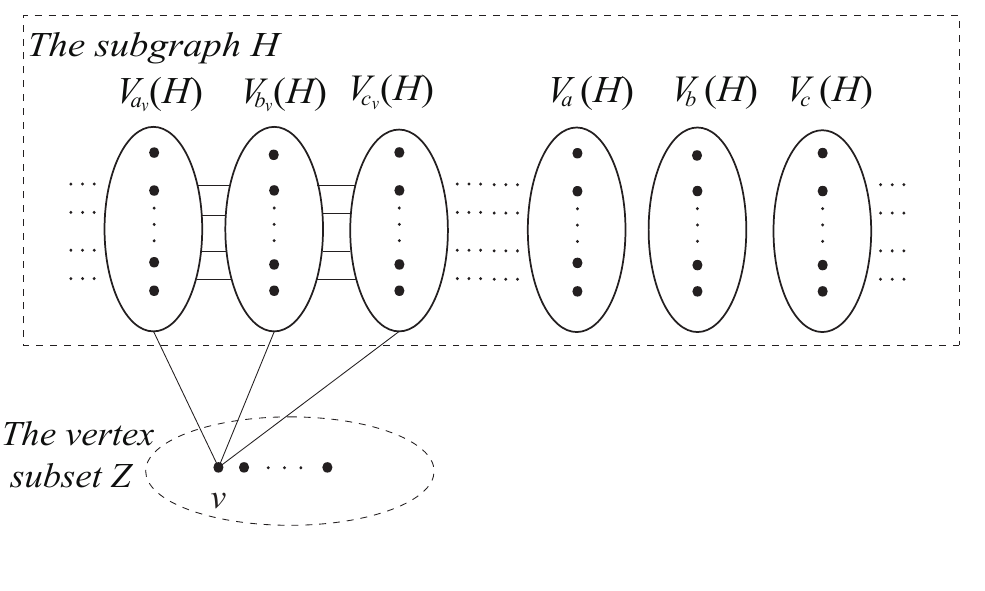}}
			\hspace{5mm}
			\subfigure[The graph $G_{1,1}$ ($v\notin Z_1$ or $Z_{d-1}$).]{\includegraphics[width=70mm]{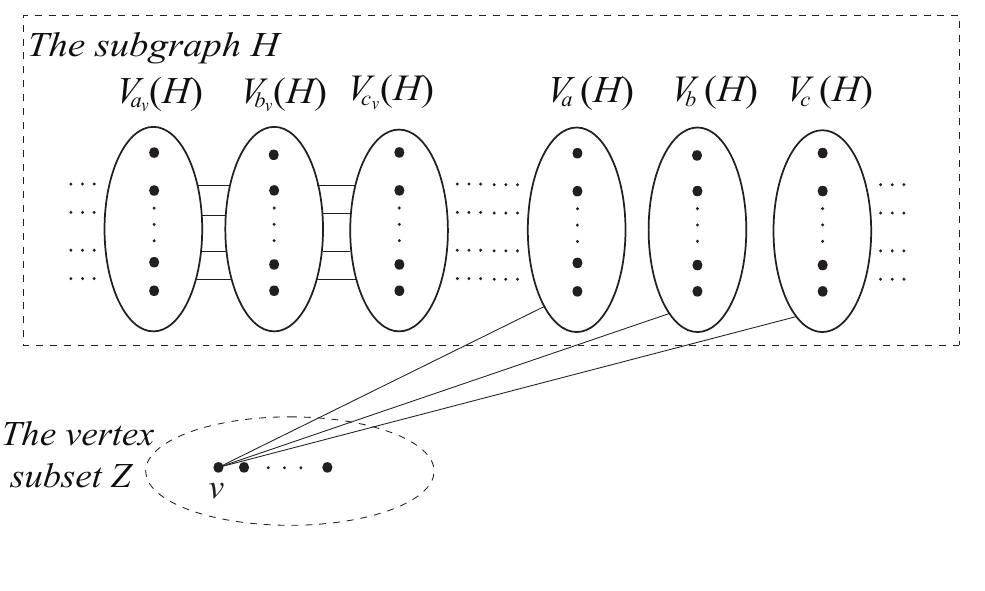}}
			\subfigure[The edge-shifting operations from $G$ to $G_{1,1}$ ($v\notin Z_1$ or $Z_{d-1}$).]{\includegraphics[width=70mm]{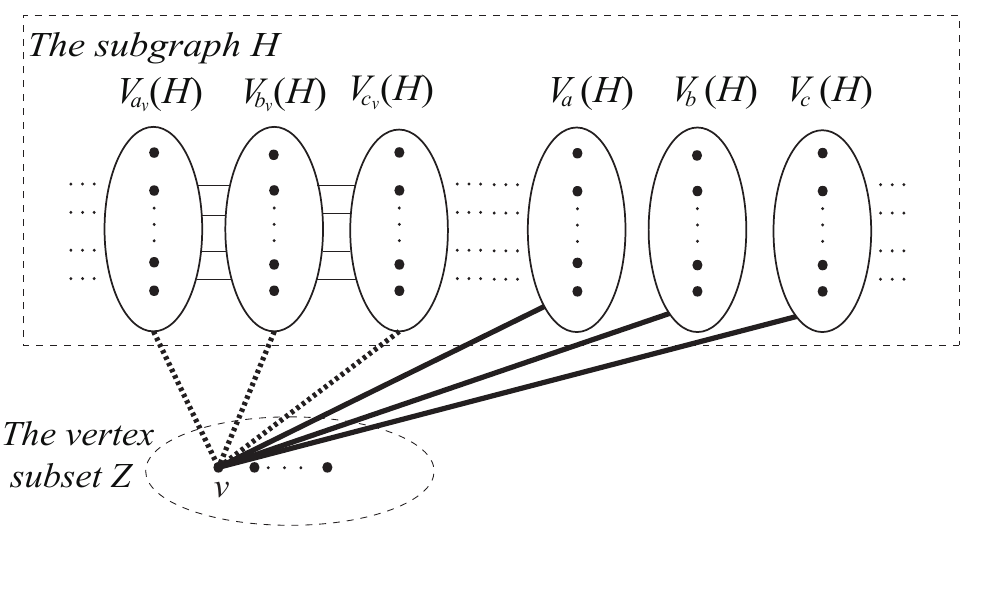}}
			\caption{$G\rightarrow G_{1,1}$ ($v\notin Z_1$ or $Z_{d-1}$).}
			\label{1.1}
		\end{figure}
\item[Step 2.] Let $w$ be a vertex of $G$ and $w\in Z\setminus \{v\}$.\\If $w\notin Z_1$ or $Z_{d-1}$, let $V_{a_w}(H),V_{b_w}(H),V_{c_w}(H)\in V(H)$  be three elements of the vertex partition set of $V(H)$, whose vertices are adjacent to $w$.  According to our assumption, $\min\{x_a,x_b,x_c\}\geq\max\{x_{a_w},x_{b_w},x_{c_w}\}$, thus without loss of generality, we assume
$x_a\geq x_{a_w}$, $ x_b\geq x_{b_w}$, $x_{c}\geq x_{c_w}$. Next we do the same operation as Step 1. and obtain a graph $G_{1,2}$ from $G_{1,1}$ by deleting all edges between $w$ and $V_{a_w}(H)$, $V_{b_w}(H)$, $V_{c_w}(H)$, respectively, and adding all edges between $w$ and $V_a(H)$, $V_b(H)$, $V_c(H)$, respectively. Then
		\begin{align*}	
			&\lambda_{\alpha}(G_{1,2})-\lambda_{\alpha}(G)\\\geq& X(G_{1,2})^TA_\alpha(G_{1,2})X(G_{1,2})-X(G)^TA_\alpha(G)X(G)\\
			\geq& X(G)^T[A_\alpha(G_{1,2})-A_\alpha(G)]X(G)\\
	=&X(G)^T[A_\alpha(G_{1,2})-A_\alpha(G_{1,1})]X(G)+X(G)^T[A_\alpha(G_{1,1})-A_\alpha(G)]X(G)\\
    \geq& 2k(1-\alpha)x_{w}(x_a+x_b+x_c-x_{a_w}-x_{b_w}-x_{c_w})+\\&\alpha k(x_a^2+x_b^2+x_c^2-x_{a_w}^2-x_{b_w}^2-x_{c_w}^2)\geq 0.
		\end{align*}
		If $w\in Z_1$ or $Z_{d-1}$, by symmetry, we suppose $w\in Z_1$. By using Equations (\ref{xax}) and (\ref{rayleigh}), then we have
			\begin{align*}	
		&\lambda_{\alpha}(G_{1,2})-\lambda_{\alpha}(G)\\
		=&\lambda_{\alpha}(G_{1,2})-\lambda_{\alpha}(G_{1,1})+\lambda_{\alpha}(G_{1,1})-\lambda_{\alpha}(G)\\
		\geq&X(G_{1,2})^TA_\alpha(G_{1,2})X(G_{1,2})-X(G)^TA_\alpha(G)X(G)\\
		\geq& X(G)^T[A_\alpha(G_{1,2})-A_\alpha(G)]X(G)\\
		=&X(G)^T[A_\alpha(G_{1,2})-A_\alpha(G_{1,1})]X(G)+X(G)^T[A_\alpha(G_{1,1})-A_\alpha(G)]X(G)\\ \geq&2k(1-\alpha)x_{w}(x_a+x_b+x_c-x_{a_w}-x_{b_w}-x_{c_w})+\\&\alpha k(x_a^2+x_b^2+x_c^2-x_{a_w}^2-x_{b_w}^2-x_{c_w}^2)
		+\\&2(k-1)\big((1-\alpha)x_w x_{0}+\frac{\alpha}{2} x_{0}^2+\frac{\alpha}{2}x_w^2\big)\geq 0.
		\end{align*}
		Repeat this step until all of the vertices in $Z$ have been chosen. Thus in the final graph, denoted by $G_2$, all vertices in $Z$ are adjacent to $V_a(H)$, $V_b(H)$ and $V_c(H)$, and $\lambda_{\alpha}(G_2)>\lambda_{\alpha}(G)$.
		
In order to ensure that the maximal graph $G$ after the edge-shifting operation is still a diameter critical graph, and prove that our conclusions hold, we do the Step 3.

\item[Step 3.] Let $G_{2,1}$ be the graph obtained
from $G_2$ by adding some edges so that $G_{2,1}[Z]$ is a clique. Then $\lambda_\alpha(G_{2,1})>\lambda_{\alpha}(G_2)>\lambda_{\alpha}(G)$.\\ If $G[V_a(H)\cup V_b(H)\cup V_c(H)]=K_k\vee K_k\vee K_k$, then our results hold. Next we consider the following two cases.
	\begin{itemize}
     \item[Case 1.1.] 	
     \begin{figure}[!h]
     	\centering
     	\includegraphics[width=80mm]{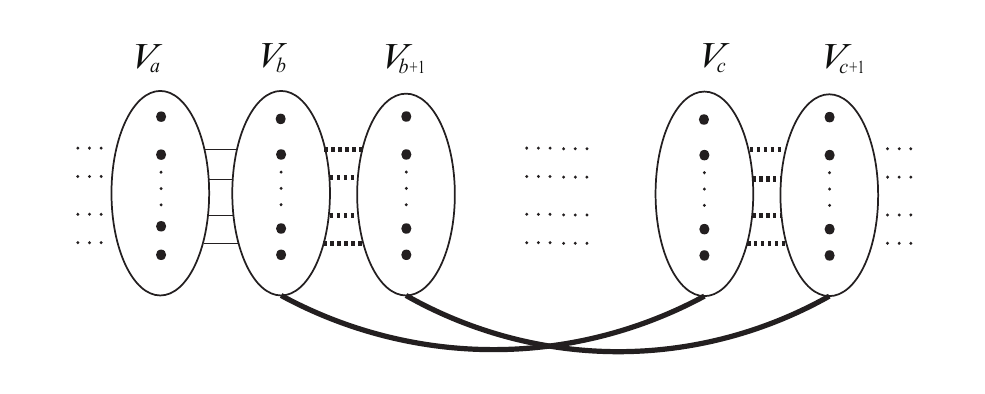}
     	\caption{ $G_{2,1}\rightarrow G_{2,2}$.}
     	\label{3.21}
     \end{figure}
$G[V_a(H)\cup V_b(H)\cup V_c(H)]=(K_k\vee K_k)\cup K_k$.

Let $G_{2,2}$ be the graph
     obtained from $G_{2,1}$ by deleting all edges between $V_b(H)$ and $V_{b+1}(H)$, all edges between $V_{c}(H)$ and $V_{c+1}(H)$, and adding all edges between $V_b(H)$ and $V_c(H)$, all edges between $V_{b+1}(H)$ and $V_{c+1}(H)$, as shown in Figure \ref{3.21}.

     If $c+1\neq d$, then
    \begin{align*}
    \lambda_{\alpha}(G_{2,2})-\lambda_{\alpha}(G_{2,1})\geq 2(1-\alpha)k^2(x_b-x_{c+1})(x_c-x_{b+1})\geq 0.
    \end{align*}
    If $c+1=d$, then
     \begin{align*}
   \lambda_{\alpha}(G_{2,2})-\lambda_{\alpha}(G_{2,1})\geq& 2(1-\alpha)k^2(x_b-x_{c+1})(x_c-x_{b+1})+\\&2(1-\alpha)k(k-1)(x_{c+1}x_c-x_{b+1}x_{c+1})+\\&\alpha k(k-1) (x_{c}^2-x_{b+1}^2)\geq 0.
    \end{align*}
   Then $\lambda_{\alpha}(G_{2,2})>\lambda_{\alpha}(G_{2,1})>\lambda_{\alpha}(G_2)>\lambda_{\alpha}(G)$, and $G_{2,2}\in\mathcal{G}_1\subset\mathcal{G}_{n,k}^d$. This contradicts that $G\notin\mathcal{G}_1$ is the maximal graph in $\mathcal{G}_{n,k}^k$. Thus the maximal graph $G\in\mathcal{G}_1\subset\mathcal{G}_{n,k}^d$.
   \item[Case 1.2.]
   \begin{figure}[!h]
   	\centering
   	\includegraphics[width=80mm]{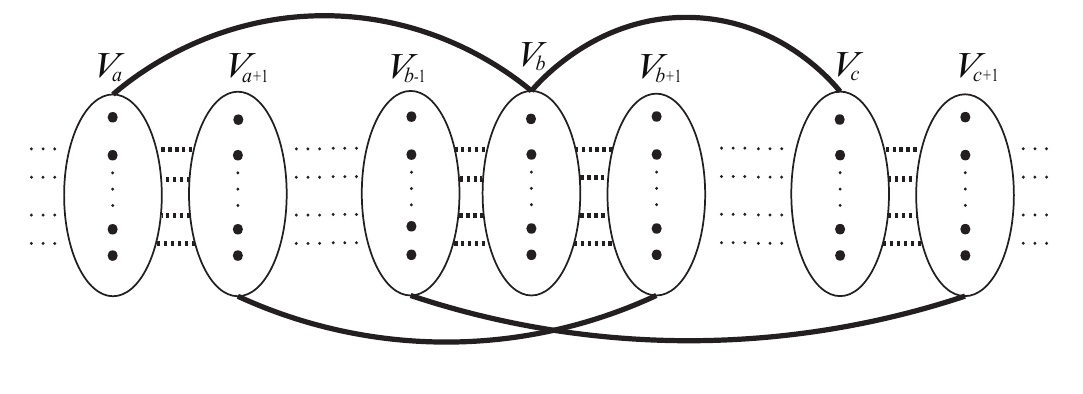}
   	\caption{ $G_{2,1}\rightarrow G_{2,3}$.}
   	\label{3.2-2}
   \end{figure}
   $G[V_a(H)\cup V_b(H)\cup V_c(H)]=K_k\cup K_k\cup K_k$. \\Let $G_{2,3}$ be the graph obtained
from $G_{2,1}$ by deleting all edges between $V_a(H)$ and $V_{a+1}(H)$, all edges between $V_b(H)$ and $V_{b+1}(H)$, all edges between $V_{c}(H)$ and $V_{c+1}(H)$, all edges between $V_b(H)$ and $V_{b-1}(H)$, and adding all edges between $V_a(H)$ and $V_b(H)$, all edges between $V_{b}(H)$ and $V_{c}(H)$, all edges between $V_{a+1}(H)$ and $V_{b+1}(H)$, all edges between $V_{b-1}(H)$ and $V_{c+1}(H)$, as shown in Figure \ref{3.2-2}.

If $c+1\neq d$, then
\begin{align*}
	 &\lambda_{\alpha}(G_{2,3})-\lambda_{\alpha}(G_{2,1})\\
	\geq&2(1-\alpha)k^2[(x_a-x_{b+1})(x_b-x_{a+1})+(x_b-x_{c+1})(x_c-x_{b-1})]\geq 0.
	\end{align*}
If $c+1=d$, then
 \begin{align*}
 &\lambda_{\alpha}(G_{2,3})-\lambda_{\alpha}(G_{2,1})\\
 \geq&2(1-\alpha)k^2[(x_a-x_{b+1})(x_b-x_{a+1})+(x_b-x_{c+1})(x_c-x_{b-1})]+\\&2(1-\alpha)k(k-1)(x_cx_{c+1}-x_{b-1}x_{c+1})+\alpha k(k-1)(x_c^2-x_{b-1}^2)\geq 0.
 \end{align*}
Then $\lambda_{\alpha}(G_{2,3})>\lambda_{\alpha}(G_{2,1})>\lambda_{\alpha}(G)$ and $G_{2,3}\in\mathcal{G}_1\subset\mathcal{G}_{n,k}^d$. This contradicts that $G\notin\mathcal{G}_1$ is the maximal graph in $\mathcal{G}_{n,k}^d$. Thus the maximal graph $G\in\mathcal{G}_1\subset\mathcal{G}_{n,k}^d$.
\end{itemize}
\end{itemize}
\item[Case 2.] $0$ or $d\in\{a,b,c\}$.
\\Since $x_{0}=\frac{(1-\alpha)|V_1|}{\lambda_\alpha-\alpha |V_1|}x_{1}$, by using Proposition 36. in \cite{Nik-1} and Lemma \ref{le:g-uv}, then $\lambda_{\alpha}>(|V_1|+1)-1=|V_1|$, that is $x_0<x_1$. Because of symmetry, we have $x_d<x_{d-1}$. Thus, $\{a,b,c\}=\{0,1,r\}$ or $\{r,d-1,d\}$, where $1\leq r\leq d-1$. Without loss of generality, we assume $\{a,b,c\}=\{0,1,r\}$,  where $2\leq r\leq d-1$. Next we consider two cases: first, $Z_1=Z_{d-1}=\emptyset$, second, $Z_1\neq Z_{d-1}\neq\emptyset$.  The case of $Z_1\neq Z_{d-1}\neq\emptyset$ can be converted to the case that $Z_1\neq \emptyset$ and $Z_d\neq \emptyset$   by using some edge-shifting operations.
\begin{itemize}
\begin{figure}[!h]
	\centering
	\includegraphics[width=80mm]{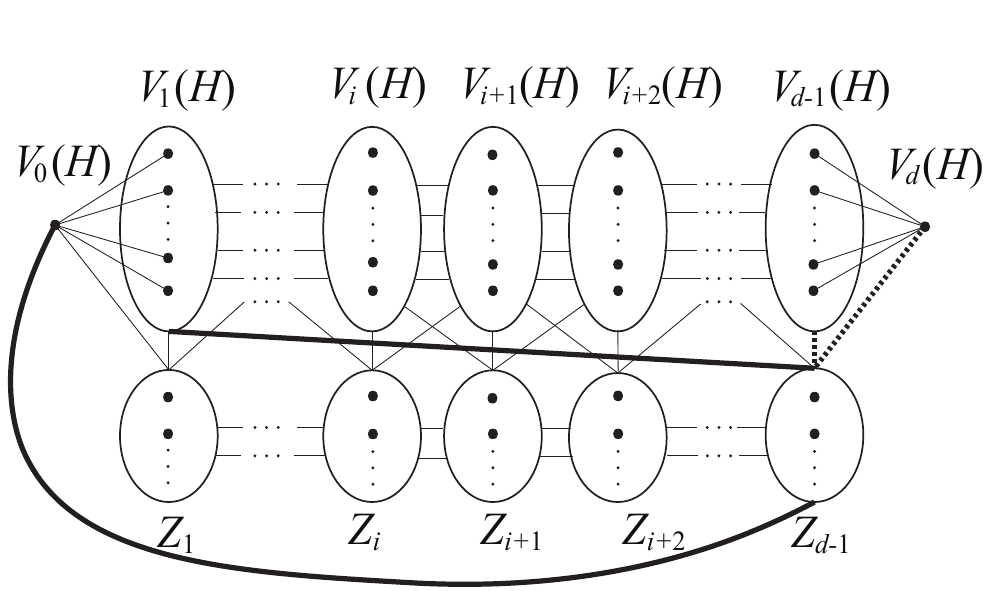}
	\caption{$G\rightarrow G_{3,1}$.}
	\label{g-31}
\end{figure}
\begin{figure}[!h]
	\centering
	\includegraphics[width=80mm]{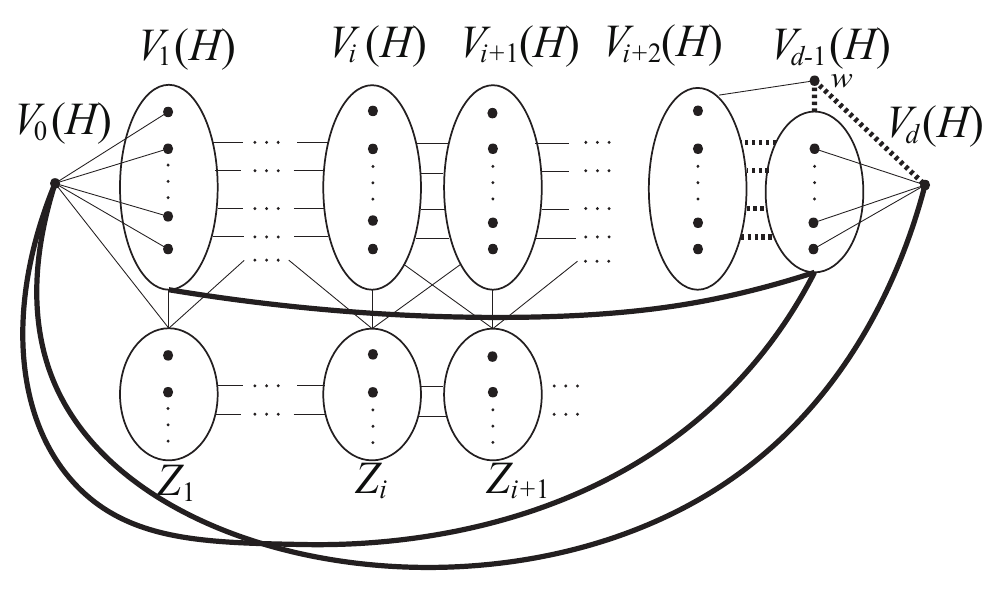}
	\caption{$G_{3,1}\rightarrow G_{3,2}$.}
	\label{g31-g32}
\end{figure}
\begin{figure}[!h]
	\centering
	\includegraphics[width=80mm]{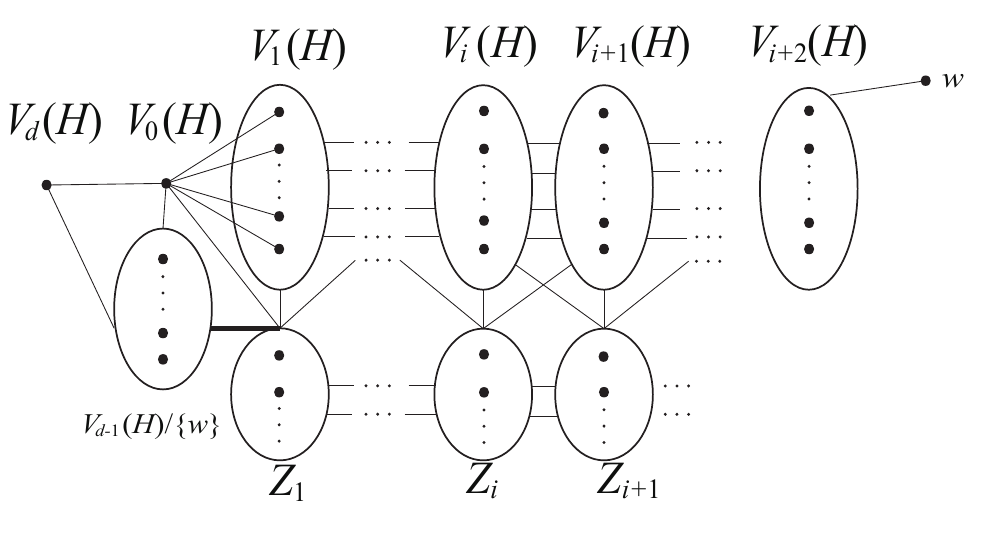}
	\caption{$G_{3,2}\rightarrow G_{3,3}$.}
	\label{g32-g33}
\end{figure}
\begin{figure}[!h]
	\centering
	\includegraphics[width=80mm]{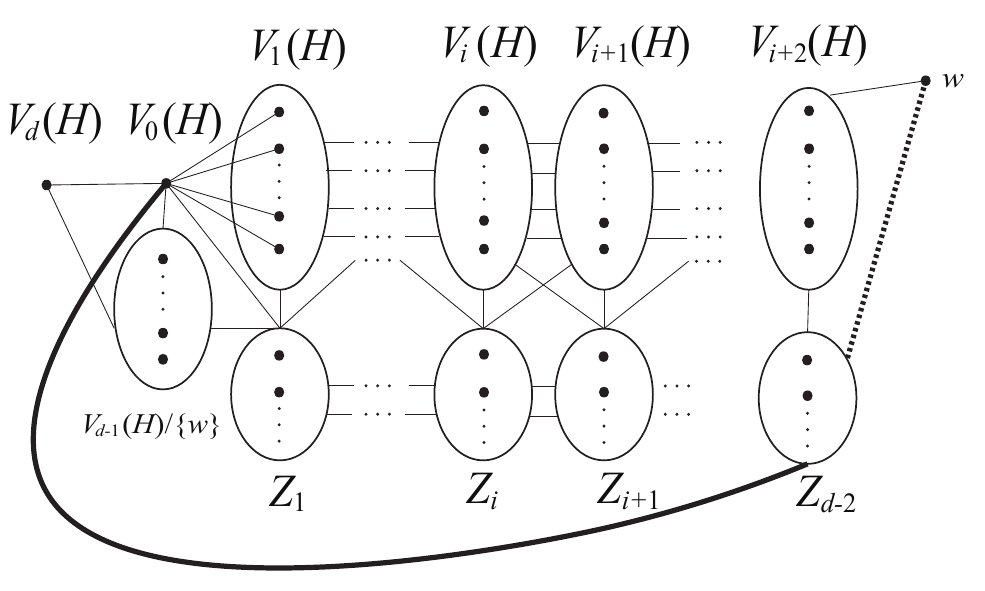}
	\caption{$G_{3,3}\rightarrow G_{3,4}$.}
	\label{g33-g34}
\end{figure}
\begin{figure}[!h]
	\centering
	\includegraphics[width=80mm]{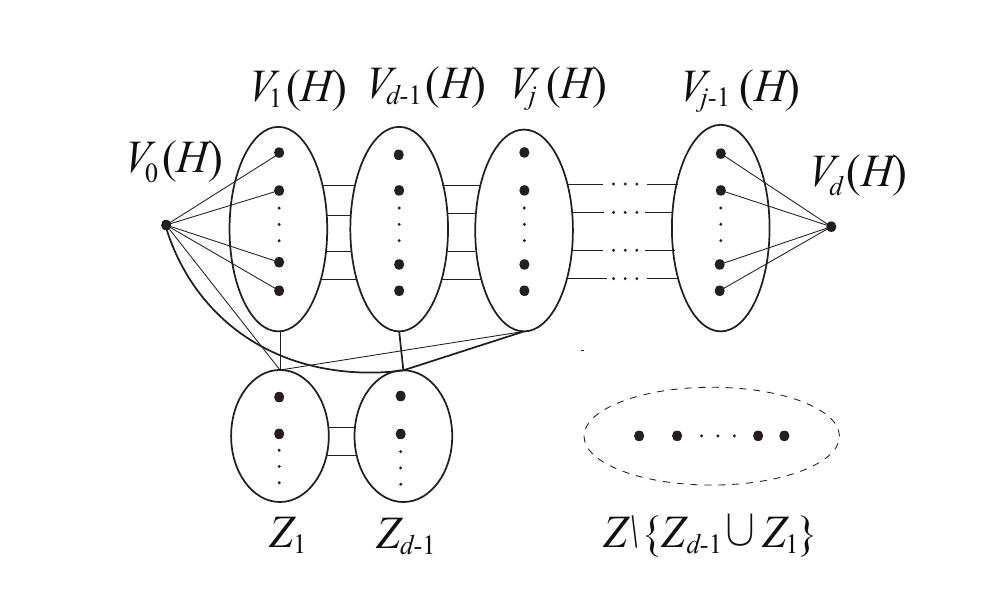}
	\caption{$G_{3,5}$.}
	\label{g35}
\end{figure}
\begin{figure}[!h]
	\centering
	\includegraphics[width=80mm]{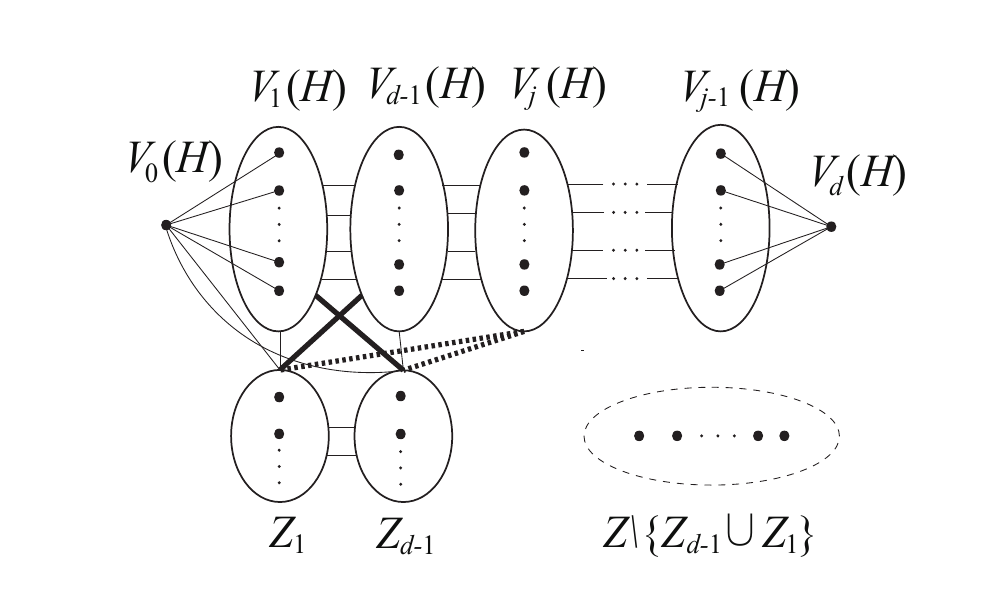}
	\caption{$G_{3,5}\rightarrow G_{3,6}$.}
	\label{g35-g36}
\end{figure}
	\item[Case 2.1.]  $Z_1=Z_{d-1}=\emptyset$. \\Let $x_{a'}\geq x_{b'}\geq x_{c'}$ be the three largest components of  $X(G)\setminus\{x_0,x_d\}=(x_1,x_2,\ldots,x_{d-1})^T$ corresponding to all vertices of $V_{a'}$, $V_{b'}$, $V_{c'}$ ($a'$, $b'$ and $c'$ are pairwise distinct), respectively. We follow all steps as Case 1., then we can obtain the graphs like $G_{1,1}$, $G_{1,2}$, $G_{2,1}$, $G_2$, $G_{2,2}$ or $G_{2,3}$, such that $\lambda_\alpha(G_{2,3}) \big(\lambda_{\alpha}(G_{2,2})\big)>\lambda_{\alpha}(G_{2,1})>\lambda_{\alpha}(G_2) \big(\lambda_{\alpha}(G_{1,2}),\lambda_{\alpha}(G_{1,1})\big)>\lambda_{\alpha}(G)$, where $G_{2,3}$ and $G_{2,2}$ are in $\mathcal{G}_1\subseteq\mathcal{G}_{n,k}^d$. Then we obtain a same contradiction like Case 1. that $G$ is a maximal graph in $\mathcal{G}_{n,k}^d$ and $G\notin\mathcal{G}_1$.
	
	\item[Case 2.2.]  $Z_1\neq Z_{d-1}\neq\emptyset$ and $r\neq d-1$. \\Let $G_{3,1}$ be a graph obtained from $G$ by deleting all edges between $Z_{d-1}$ and $V_d(H)$, $V_{d-1}(H)$, respectively, and adding all edges between $Z_{d-1}$ and $V_0(H)$, $V_1(H)$, respectively, as shown in Figure \ref{g-31}. Then there is no edge between $Z$ and $V_{d-1}(H)\cup V_d(H)$.  By using Lemma \ref{v-u}, then $\lambda_{\alpha}(G_{3,1})>\lambda_{\alpha}(G)$. The following discussion also applies to the case that one of $Z_1$, $Z_{d-1}$ is not an empty set.
	
	First, we choose one vertex $w$ from $V_{d-1}(H)$. Let $G_{3,2}$ be a graph obtained from $G$ by deleting all edges between $w$ and $(V_{d-1}(H)\setminus{w})\cup V_d(H)$, all edges between $V_{d-2}(H)$ and $V_{d-1}(H)\setminus{w}$, and adding all edges between $V_{d-1}(H)\setminus{w}$ and $V_1(H)\cup V_0(H)$, and the edge $V_0V_d$, see Figure \ref{g31-g32}. By using Lemma \ref{v-u}, we have $\lambda_{\alpha}(G_{3,2})>\lambda_\alpha(G)$.
	
	 Second, let $G_{3,3}$ be a graph obtained from $G_{3,2}$ by adding all edges between $V_{d-1}\setminus{w}$ and $Z_1$, as shown in Figure \ref{g32-g33}. By using Lemma \ref{le:g-uv}, we have $\lambda_{\alpha}(G_{3,3})>\lambda_{\alpha}(G)$.
	
	  Third, if $Z_{d-2}=\emptyset$, then we may follow the proof of Case 1., otherwise, if $Z_{d-2}\neq\emptyset$, let $G_{3,4}$ be a graph obtained from $G_{3,3}$ by deleting all edges between $w$ and $Z_{d-2}$, and adding all edges between $V_0$ and $Z_{d-2}$, as shown in Figure \ref{g33-g34}. By using Lemma \ref{v-u}, we have $\lambda_{\alpha}(G_{3,4})>\lambda_{\alpha}(G)$. Next, we may follow the proof of Case 1. and get a contraction. Thus the maximal graph $G\in\mathcal{G}_1\subset\mathcal{G}_{n,k}^d$.	\item[Case 2.3.]  $Z_1\neq Z_{d-1}\neq\emptyset$ and $r=d-1$ ($\{a,b,c\}=\{0,1,r\}$).\\
	  First, we keep all edges between $Z_1$ and $V_0\cup V_1(H)$, keep all edges between $Z_{d-1}$ and $V_{d-1}(H)$. Next we delete all edges between $Z_{d-1}$ and $V_{d}$ and add all edges between $Z_{d-1}$ and $V_{0}$.
	
	   Second, let $x_j$ be the largest component of $X(G)\setminus\{x_0,x_1,x_{d-1},x_d\}$. Then we repeat the similar operations of Step 1.--Step 3. in Case 1. and obtain a graph $G_{3,5}$ with $\lambda_{\alpha}(G_{3,5})>\lambda_{\alpha}(G)$, as shown in Figure \ref{g35}. Let $G_{3,6}$ be a graph obtained from $G_{3,5}$ by deleting all edges between $Z_1$ and $V_j(H)$, all edges between $Z_{d-1}$ and $V_j(H)$, and adding all edges between $Z_1$ and $V_{d-1}(H)$, all edges between $Z_{d-1}$ and $V_1(H)$, as shown in Figure \ref{g35-g36}. Then by using Lemma \ref{le:g-uv}, we have $\lambda_{\alpha}(G_{3,5})>\lambda_{\alpha}(G)$.
	   Note that the graph $G_{3,6}$ satisfys $G_{3,6}\in\mathcal{G}_{n.k}^d$, and in the graph $G_{3,6}$, one of $Z_1$ and $Z_{d-1}\neq\emptyset$ and $r\neq d-1$. It belongs to Case 2.2.
\end{itemize}
\end{itemize}
These complete the proof of Lemma \ref{th-1}.
\end{proof}
\begin{figure}[!h]
	\centering
	\includegraphics[width=80mm]{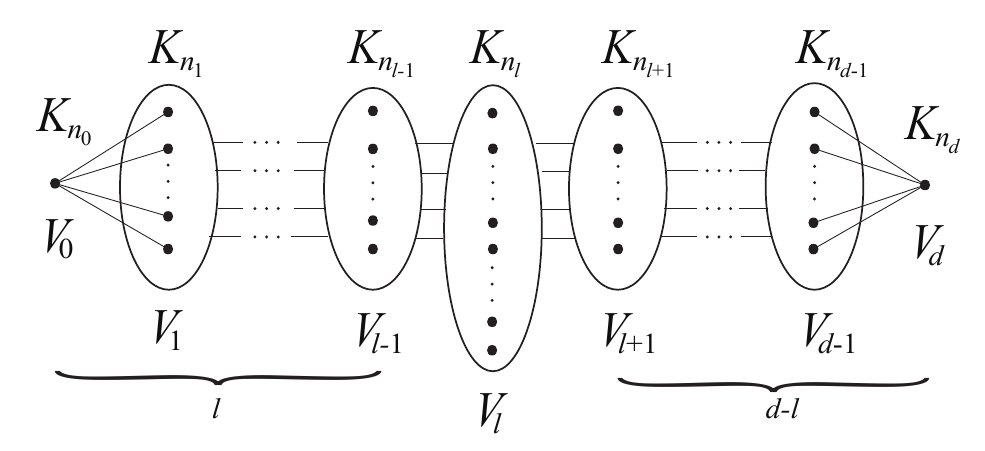}
	\caption{$G(l,d-l)$}
	\label{fig1}
\end{figure}
\begin{lemma}\label{th-2}
Let $G=G(l,d-l)=\overbrace{K_{n_0}\vee\cdots\vee K_{n_{l-1}}}^l\vee K_{n_l}\vee\overbrace{K_{n_{l+1}}\cdots\vee K_{n_d}}^{d-l}$, where $n_0=n_d=1$, $n_{l}\geq 2k$, $n_i=k$  for each $i\in\{1,2,\ldots,d-1\}\setminus\{l\}$, as shown in Figure \ref{fig1}.
 Let $X(G)=(x_0,\overbrace{x_1,\ldots,x_1}^{n_1},\ldots,\overbrace{x_{d-1},\ldots,x_{d-1}}^{n_{d-1}},x_d)^T$ be the Perron vector of the graph $G$ corresponding to $\lambda_{\alpha}(G)$, where $x_i=x_{v^i_1}=x_{v^i_2}=\ldots=x_{v^i_{n_i}}$ for each $i=1,2,\ldots,d-1$. If $\alpha\in[0,1)$, $n_l\geq 2k$, and  $l\geq d-l+1$. Then
\begin{itemize}
	\item[\rm(i)] $x_{l-1}>\cdots>x_1>x_0$ and  $x_{l+1}>\cdots>x_{d-1}>x_d$.
	\item[\rm(ii)] $n_lx_l>kx_{l-1}$.
	\item[\rm(iii)] $x_d>x_0$ and $x_i<x_{d-i}$ for each $i=1,2,\ldots,d-l-1$, where $l\geq d-l+2$.
\end{itemize}
\end{lemma}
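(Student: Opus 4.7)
All three parts are proved by analyzing the eigenequation~(\ref{AX}) on the equitable partition $\{V_0,\ldots,V_d\}$ of $G=G(l,d-l)$. Throughout, I will use two bounds on $\lambda:=\lambda_\alpha(G)$: the lower bound $\lambda > n_l+k-1$, obtained by applying Lemmas~\ref{le:g-uv} and~\ref{Nik-complete} to the proper subgraph $G[V_{l-1}\cup V_l]\cong K_{n_l+k}$, and the upper bound $\lambda \le n_l+2k-1$, obtained from Lemma~\ref{le:bound2} since $\Delta(G)=n_l+2k-1$. Because $n_l\ge 2k$, the lower bound also gives $\lambda>3k-1$.

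For part~(i), set $\mu:=\lambda-2\alpha k-(k-1)$, so that $\mu>2(1-\alpha)k$. The $v_0$-eigenequation yields $x_0=\frac{(1-\alpha)k}{\lambda-\alpha k}\,x_1<x_1$. For interior indices $2\le i\le l-2$ the eigenequation simplifies to $\mu x_i=(1-\alpha)k(x_{i-1}+x_{i+1})$, which rearranges to
\[
x_{i+1}-x_i \;=\; (x_i-x_{i-1}) + \frac{\mu-2(1-\alpha)k}{(1-\alpha)k}\,x_i,
\]
forcing the consecutive differences to be positive and strictly increasing. The $V_1$-equation (where $n_0=1$) is handled by a direct substitution using the expression for $x_0$. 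A symmetric argument for the right chain produces $x_{l+1}>\cdots>x_d$.

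For part~(ii), isolate $(1-\alpha)k\,x_{l-1}$ in the $V_l$-eigenequation $(\lambda-2\alpha k-(n_l-1))\,x_l=(1-\alpha)k(x_{l-1}+x_{l+1})$ and substitute into $n_l x_l-kx_{l-1}$ to obtain
\[
n_l x_l - k x_{l-1} \;=\; \frac{2n_l-1+\alpha(2k-n_l)-\lambda}{1-\alpha}\,x_l + k\,x_{l+1}.
\]
The upper bound $\lambda\le n_l+2k-1$ makes the first summand at least $(n_l-2k)(1-\alpha)\,x_l\ge 0$, while $kx_{l+1}>0$ yields the strict positivity.

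For part~(iii), define $g_j:=x_{d-j}-x_j$ for $0\le j\le d-l-1$. The $v_0$- and $v_d$-eigenequations have identical form, so subtracting yields $g_0=\frac{(1-\alpha)k}{\lambda-\alpha k}\,g_1$. Subtracting the eigenequations at $V_j$ and $V_{d-j}$ gives the symmetric three-term recurrence $\mu g_j=(1-\alpha)k(g_{j-1}+g_{j+1})$ for $2\le j\le d-l-2$, together with a parallel modification at $j=1$. At the inner boundary $j=d-l-1$, combining the $V_{l+1}$-eigenequation (which contributes the non-standard term $(1-\alpha)n_l x_l$) with the $V_{d-l-1}$-equation yields
\[
\mu\,g_{d-l-1} \;=\; \alpha(n_l-k)\,x_{l+1} + (1-\alpha)(n_l x_l - k\,x_{d-l}) + (1-\alpha)k\,g_{d-l-2}.
\]
Here the first term is nonnegative and the second is strictly positive: part~(ii) gives $n_l x_l>k\,x_{l-1}$, while part~(i) combined with the hypothesis $l\ge d-l+2$ (equivalent to $d-l<l-1$) gives $x_{l-1}>x_{d-l}$. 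Assembling these relations into a linear system $A\mathbf g=\mathbf b$ with $\mathbf b=(0,\ldots,0,P)^T$ and $P>0$, the tridiagonal coefficient matrix $A$ has non-positive off-diagonal entries and is strictly diagonally dominant (using $\lambda>2k$ and $\mu>2(1-\alpha)k$) and irreducible, so it is an $M$-matrix whose inverse is entrywise positive; therefore $\mathbf g>0$, giving both assertions of~(iii). The degenerate case $d=l+1$ is handled directly from the endpoint formulas combined with part~(ii). The main obstacle is the inner-boundary analysis just described, since it is the step that ties part~(ii) into the sign analysis of~(iii) via the $M$-matrix argument; the remaining steps are routine manipulations of the eigenequation together with the two spectral bounds on $\lambda$.
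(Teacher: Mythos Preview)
Your argument is correct and follows a genuinely different route from the paper in every part.

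For (i), the paper solves the linear recurrence \eqref{recurrence-1} explicitly via its characteristic roots $t_1,t_2$, writes $x_i=At_1^{i-1}+Bt_2^{i-1}$, and then compares $x_i/x_{i-1}$ with $1$ through a somewhat lengthy estimate (with a separate hand computation for $l\le 3$). Your one–line difference relation $x_{i+1}-x_i=(x_i-x_{i-1})+\tfrac{\mu-2(1-\alpha)k}{(1-\alpha)k}\,x_i$, together with the start $x_2>x_1$ obtained from the $V_1$–equation and the formula for $x_0$, is shorter and avoids the characteristic–root machinery entirely.

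For (ii), the paper works at $V_{l-1}$ and feeds in a \emph{lower} bound on $\lambda$; you work instead at $V_l$ and use the \emph{upper} bound $\lambda\le\Delta(G)=n_l+2k-1$. Your identity
\[
n_lx_l-kx_{l-1}=\frac{2n_l-1+\alpha(2k-n_l)-\lambda}{1-\alpha}\,x_l+kx_{l+1}
\]
makes the positivity transparent once $\lambda\le n_l+2k-1$ is inserted, and does not rely on the sharper spectral lower bounds that the paper invokes.

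For (iii), the paper exploits the mirror symmetry of the two ``arms'' to show the equality of successive ratios \eqref{x/y}, then compares the single pair of eigenequations at $V_{d-l-1}$ and $V_{l+1}$ to force $x_{l+1}>x_{d-l-1}$, after which the ratio equalities propagate the sign to all $g_j$. Your route packages the differences $g_j=x_{d-j}-x_j$ into a tridiagonal system $A\mathbf g=(0,\dots,0,P)^T$ and uses that $A$ is a strictly diagonally dominant irreducible $M$--matrix; this gives all $g_j>0$ at once. The two arguments are equivalent in spirit (both reduce to positivity of the ``source'' term $P=\alpha(n_l-k)x_{l+1}+(1-\alpha)(n_lx_l-kx_{d-l})$, which is where part (ii) enters), but the paper's ratio trick is a bit more hands-on while your $M$--matrix formulation is more systematic and scales cleanly. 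One small caution: your inner–boundary equation assumes $n_{l+2}=k$ and that the index $d-l-1$ is genuinely ``interior'' on the left arm; when $d-l\in\{1,2\}$ the rows $j=0,1$ and $j=d-l-1$ collapse together and the coefficients pick up the $n_0=n_d=1$ corrections, so those small cases should be written out explicitly (as you already note for $d=l+1$).
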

\begin{proof}
We keep all notations above unless otherwise stated. Let $\lambda_{\alpha}(G)=\lambda_{\alpha}$. Since $n_l\geq 2k$, by Lemmas \ref{Nik-complete} and \ref{le:g-uv}, then $\lambda_{\alpha}>\lambda_{\alpha}(K_k\vee K_{2k})=3k-1$. By Equation (\ref{AX}), we have $\lambda_{\alpha}x_0=\alpha kx_0+(1-\alpha)kx_1$, then $x_{0}=\frac{(1-\alpha)k}{\lambda_\alpha-\alpha k}x_{1}<x_{1}$ for any $\alpha\in[0,1)$.

 \textbf{(i) \bm{$x_{l-1}>\cdots>x_1>x_0$} and  \bm{$x_{l+1}>\cdots>x_{d-1}>x_d$}.}

 In terms of symmetry, we just consider $i=0,1,\ldots,l-1$. It is similar for $i=l+1,l+2,\ldots,d$. Next we are going to consider the following two cases:

\textbf{Case 1.} $l\geq 4$.

 By the eigenequations of $A_\alpha(G)$, we have the following recurrence equations:
\begin{equation}\label{recurrence-1}
(1-\alpha)kx_{i+1}+(2k\alpha+k-1-\lambda_\alpha) x_{i}+(1-\alpha)kx_{i-1}=0, \quad i=1,2,\ldots,l-1.
\end{equation}
Let $x_1,x_2,\ldots,x_{l-1}$ be a sequence of numbers
satisfying the above recursive formula. Thus its characteristic equation is
\begin{equation}\label{recurrence-2}
(1-\alpha)kt^2+(2k\alpha+k-1-\lambda_\alpha) t+(1-\alpha)k=0.
\end{equation}
Notice that $\lambda_\alpha>3k-1$, then $(2k\alpha+k-1-\lambda_\alpha)^{2}-4(1-\alpha)^{2}k^2>0$, thus Equation \eqref{recurrence-2} has two different real roots $t_{1}$ and $t_{2}$ such that
$$
t_{1} t_{2}=1, \quad t_{1}+t_{2}=\frac{\lambda_\alpha-2k\alpha+1-k}{(1-\alpha)k}>\frac{(3k-1)-2k\alpha+1-k}{(1-\alpha)k}=2.
$$
It is clear that $t_{1}>t_{2}>0$. Since $\lambda_{\alpha}>3k-1$, if $t=1$,  then the Equation \eqref{recurrence-2} is equal to $3k-1-\lambda_{\alpha}<0$. By Lemmas \ref{le:g-uv}, \ref{bound}(i), for $l\geq d-l+1\geq 5$, we have $\lambda_{\alpha}>\lambda_{\alpha}(K_k\vee K_{2k}\vee K_k)=\frac{4\alpha +3+\sqrt{16\alpha^2-32\alpha+17}}{2}k-1$, if $t=2$, then the Equation \ref{recurrence-2} is equal to $(7-\alpha)k-2-2\lambda_{\alpha}<(4 - 5\alpha - \sqrt{16\alpha^2 - 32\alpha + 17})k<0$, that is $t_1>2$. Therefore, we have $t_{1}>2,\ 1>t_{2}>0$.  By the theory of linear recurrence equation about the sequence, there exist $A$ and $B$ such that $x_i=At_1^{i-1}+Bt_2^{i-1}$, $i=2,3,\ldots,l-2$. The boundary conditions are as follows:
\begin{equation*}
\left\{
\begin{array}{c}
\begin{aligned}
 x_1&=A+B,\\
 x_2&=At_1+Bt_2,\\
\lambda_{\alpha}x_1&=2\alpha kx_1+(1-\alpha)(x_0+(k-1)x_1+kx_2).
\end{aligned}
\end{array}
\right.
\end{equation*}
Then
\begin{equation*}
\left\{
\begin{array}{c}
\begin{aligned}
A&= \frac{[\lambda_\alpha-(1+\alpha)k+(1-\alpha)(1-t_2k)]x_1-(1-\alpha)x_0}{(1-\alpha)(t_1-t_2)k},\\
B&=\frac{-[\lambda_\alpha-(1+\alpha)k+(1-\alpha)(1-t_1k)]x_1+(1-\alpha)x_0}{(1-\alpha)(t_1-t_2)k}.
\end{aligned}
\end{array}
\right.
\end{equation*}
That is,
$$x_i=\frac{(t_1^{i-1}-t_2^{i-1})\big[(\lambda_{\alpha}-(1+\alpha)k)x_1+(1-\alpha)(x_1-x_0)\big] -k(1-\alpha)(t_1^{i-2}-t_2^{i-2})x_1}{(1-\alpha)(t_1-t_2)k}.$$
Thus $$\frac{x_{i}}{x_{i-1}}=\frac{(t_1^{i-1}-t_2^{i-1})\big[(\lambda_{\alpha}-(1+\alpha)k)x_1+(1-\alpha)(x_1-x_0)\big] -k(1-\alpha)(t_1^{i-2}-t_2^{i-2})x_1}{(t_1^{i-2}-t_2^{i-2})\big[(\lambda_{\alpha}-(1+\alpha)k)x_1+(1-\alpha)(x_1-x_0)\big]-k(1-\alpha)(t_1^{i-3}-t_2^{i-3})x_1},$$
where $i=2,\ldots,l-1.$

Since $t_1t_2=1$, $t_1>2$ and $1>t_2>0$ for any $\alpha\in[0,1)$, then $t_1^{i-1}-t_2^{i-1}=t_1(t_1^{i-2}-t_2^{i})>t_1(t_1^{i-2}-t_2^{i-2})>2(t_1^{i-2}-t_2^{i-2})>0$.

By using Lemma \ref{bound}(i), we have $\lambda_{\alpha}>\frac{4\alpha +3+\sqrt{16\alpha^2-32\alpha+17}}{2}k-1$. Then for $k\geq 2$, then we have
$[\lambda_{\alpha}-(1+\alpha)k]-k>~\frac{2\alpha -1+\sqrt{16\alpha^2-32\alpha+17}}{2}k-1>~0.93k-1>~0.$
Thus $\lambda_{\alpha}-(1+\alpha)k>k>(1-\alpha)k$, for any $\alpha\in[0,1)$.

Let $h_1(t_1,t_2)=(t_1^{i-1}-t_2^{i-1})\big[(\lambda_{\alpha}-(1+\alpha)k)x_1+(1-\alpha)(x_1-x_0)\big] -k(1-\alpha)(t_1^{i-2}-t_2^{i-2})x_1$, and
 $h_2(t_1,t_2)=(t_1^{i-2}-t_2^{i-2})\big[(\lambda_{\alpha}-(1+\alpha)k)x_1+(1-\alpha)(x_1-x_0)\big] -k(1-\alpha)(t_1^{i-3}-t_2^{i-3})x_1$.

Notice that $x_1>x_0$, then we have
\begin{align*}
h_1(t_1,t_2)-h_2(t_1,t_2)=&\big[(t_1^{i-1}-t_2^{i-1})-(t_1^{i-2}-t_2^{i-2})\big](\lambda_{\alpha}-(1+\alpha)k)x_1\\&+\big[(t_1^{i-3}-t_2^{i-3})-(t_1^{i-2}-t_2^{i-2})\big](1-\alpha)kx_1\\&+\big[(t_1^{i-1}-t_2^{i-1})-(t_1^{i-2}-t_2^{i-2})\big](1-\alpha)(x_1-x_0)\\>&\big[(t_1^{i-1}-t_2^{i-1})-(t_1^{i-2}-t_2^{i-2})\big](\lambda_{\alpha}-(1+\alpha)k)x_1\\&+\big[(t_1^{i-3}-t_2^{i-3})-(t_1^{i-2}-t_2^{i-2})\big](1-\alpha)kx_1\\>&(t_1^{i-1}-t_2^{i-1})-2(t_1^{i-2}-t_2^{i-2})+(t_1^{i-3}-t_2^{i-3})\big](1-\alpha)kx_1\\>&(t_1^{i-3}-t_2^{i-3})(1-\alpha)kx_1>0.
\end{align*}
Thus we have
$x_{i}>x_{i-1}$, for each $i=2,\ldots,l-1$ when $l\geq 4$. By symmetry, we have
$x_{j}>x_{j+1}$, for each $l+1\leq j\leq d-2$ where $d-l\geq 4$.

\textbf{Case 2.} $l=1,2,3$.

 It is clear that $x_{l-1}>\cdots>x_1>x_0$ holds for $l=1$ and $l=2$. Next we are going to proof that $x_{l-1}>\cdots>x_1>x_0$ holds for $l=3$, that is $x_2>x_1>x_0$. By the eigenequations of $A_\alpha(G)$ corresponding to $x_0$ and $x_1$, we have
\begin{equation*}
\left\{
\begin{array}{c}
\begin{aligned}
&(1-\alpha) \frac{kx_{1}}{x_{0}}+( k \alpha-\lambda_\alpha)=0, \\
&(1-\alpha) \frac{kx_{2}}{x_{1}}+[(k+1) \alpha+k-1-\lambda_\alpha] +(1-\alpha) \frac{x_{0}}{x_{1}}=0.
\end{aligned}
\end{array}
\right.
\end{equation*}
Then we have
\begin{equation*}
\left\{
\begin{array}{c}
\begin{aligned}
&\frac{x_{1}}{x_{0}}=\frac{\lambda_\alpha- k \alpha}{(1-\alpha)k}, \\
&\frac{x_{2}}{x_{1}}=\frac{\lambda_{\alpha}+1-k-(k+1)a}{(1-\alpha)k}-\frac{x_0}{kx_1}. \\
\end{aligned}
\end{array}
\right.
\end{equation*}
Thus $\frac{x_{2}}{x_{1}}=\frac{(\lambda_{\alpha}+1-k-(k+1)\alpha)(\lambda_{\alpha}-k\alpha)-(1-\alpha)^2k}{(\lambda_{\alpha}-k\alpha)(1-\alpha)k}.$ Let $g_1=(\lambda_{\alpha}+1-k-(k+1)\alpha)(\lambda_{\alpha}-k\alpha)-(1-\alpha)^2k$, $g_2=(\lambda_{\alpha}-k\alpha)(1-\alpha)k$. Therefore, in order to prove $\frac{x_2}{x_1}>1$, we only need to prove $g_1-g_2>0$. Let $g_3=g_1-g_2$. By direct calculation, we have
\begin{align*}
g_3=g_1-g_2
=\lambda_{\alpha}^2+(1-k\alpha-2k-\alpha)\lambda_{\alpha}-k(1-2k\alpha-\alpha).
\end{align*}
Note that $g_3$ is a fuction of $\lambda_{\alpha}$, so $g_3 = 0$ has solutions if and only if  $(1-k\alpha-2k-\alpha)^2+4k(1-2k\alpha-\alpha)>0$.
Note that  $(1-k\alpha-2k-\alpha)^2+4k(1-2k\alpha-\alpha)=(k+1)^2\alpha^2-(4k^2+2k+2)\alpha+4k^2+1$. By direct calculation, it is easy to prove that $(k+1)^2\alpha^2-(4k^2+2k+2)\alpha+4k^2+1>0$ holds  for $\alpha\in[0,1)$ and $k\geq 2$.

Thus in order to prove $g_3=g_1-g_2>0$, we need to prove that $\lambda_{\alpha}$ is greater than the maximum solution of $g_3=0$, that is $\lambda_{\alpha}>\frac{1}{2}(\alpha+\alpha k+2k-1+\sqrt{(\alpha-2)^2k^2+2\alpha(\alpha-1)k+(\alpha-1)^2})$. Notice that $\lambda_{\alpha}>3k-1$, then we need to prove that $3k-1>\frac{1}{2}(\alpha+\alpha k+2k-1+\sqrt{(\alpha-2)^2k^2+2\alpha(\alpha-1)k+(\alpha-1)^2})$, which is equivalent to prove $2(3k-1)-\alpha-\alpha k-2k+1-[(\alpha-2)^2k^2+2\alpha(\alpha-1)k+(\alpha-1)^2]=(12-4\alpha)k^2-(4\alpha+8)k+4\alpha>0$. Let $g_4=(12-4\alpha)k^2-(4\alpha+8)k+4\alpha$. By direct calculate, we have $(4\alpha+8)^2-16\alpha(12-4\alpha)=16(5\alpha^2-8\alpha+4)>0$ holds for each $\alpha\in[0,1)$. Thus, if k is greater than the maximum solution of $g_4=0$, then $g_4>0$. By direct calculation, we have the maximum solution of $g_4=0$ is $\frac{\alpha+\sqrt{(5\alpha^2-8\alpha+4}+2}{2(3-\alpha)}\in[0.6667,1)$ for each $\alpha\in[0,1)$. Thus $g_4>0$ holds for each $k\geq 2$ and each $\alpha\in[0,1)$. Thus $g_3=g_1-g_2>0$. Then  $\frac{x_2}{x_1}>1$.

Thus we have
$x_{i}>x_{i-1}$, for each $i=2,\ldots,l-1$ when $1\leq l\leq 3$. By symmetry, we have
$x_{j}>x_{j+1}$, for each $l+1\leq j\leq d-2$ when $1\leq d-l\leq 3$.

These complete the proof of (i) in this Lamma \ref{th-2}. Next we are going to prove $n_lx_l>kx_{l-1}$.

\textbf{(ii) \bm{$n_lx_l>kx_{l-1}$}. }

By the eigenequation corresponding to $x_{l-1}$, that is
$$(1-\alpha) \frac{n_lx_l}{x_{l-1}}+[(n_l+k) \alpha+k-1-\lambda_\alpha] +(1-\alpha)\frac{kx_{l-2}}{x_{l-1}}=0.$$
Note that $k\geq 2$, $x_{l-1}>x_{l-2}$ and $\alpha\in[0,1)$. Then we have
\begin{align*}
\frac{n_lx_l}{kx_{l-1}}&=\frac{1}{k(1-\alpha)}[-(n_l+k) \alpha-k+1+\lambda_\alpha] -\frac{x_{l-2}}{x_{l-1}},\\
&>\frac{1}{k(1-\alpha)}[-(n_l+k) \alpha-k+1+\lambda_\alpha]-1,\\
&>\frac{1}{k(1-\alpha)}[-(n_l+k) \alpha-k+1+(n_l+k-1)]-1>1.
\end{align*}
Thus we have $n_lx_l>kx_{l-1}$. These complete the proof of the first and second results (i) and (ii) in this Lamma \ref{th-2}.

\textbf(iii) \textbf{$\bm{x_d>x_0}$ and $\bm{x_i<x_{d-i}}$ for each $\bm{i=1,2,\ldots,d-l-1}$, where $\bm{l\geq d-l+2}$.}

By the eigenequations of $A_\alpha(G)$, we have
\begin{equation*}
\left\{
\begin{aligned}
\begin{array}{l}
(1-\alpha) \frac{kx_{1}}{x_{0}}+( k \alpha-\lambda_\alpha)=0, \\
(1-\alpha) \frac{kx_{2}}{x_{1}}+[(k+1) \alpha+k-1-\lambda_\alpha] +(1-\alpha) \frac{x_{0}}{x_{1}}=0, \\
(1-\alpha) \frac{kx_{3}}{x_{2}}+(2 k \alpha+k-1-\lambda_\alpha) +(1-\alpha) \frac{kx_{1}}{x_{2}}=0,\\
\vdots\\
(1-\alpha) \frac{kx_{d-l-1}}{x_{d-l-2}}+(2 k \alpha+k-1-\lambda_\alpha) +(1-\alpha) \frac{kx_{d-l-3}}{x_{d-l-2}}=0,\\
(1-\alpha) \frac{kx_{d-l}}{x_{d-l-1}}+(2 k \alpha+k-1-\lambda_\alpha) +(1-\alpha) \frac{kx_{d-l-2}}{x_{d-l-1}}=0,\\
\vdots\\
(1-\alpha) \frac{kx_{l-1}}{x_{l-2}}+(2 k \alpha+k-1-\lambda_\alpha) +(1-\alpha) \frac{kx_{l-3}}{x_{l-2}}=0,\\
(1-\alpha) \frac{n_lx_l}{x_{l-1}}+[(n_l+k) \alpha+k-1-\lambda_\alpha] +(1-\alpha)\frac{kx_{l-2}}{x_{l-1}}=0,\\
(1-\alpha)\frac{kx_{l-1}}{x_l}+(2k\alpha+n_l-1-\lambda_\alpha) +(1-\alpha)\frac{kx_{l+1}}{x_l
}=0,\\
(1-\alpha) \frac{n_lx_l}{x_{l+1}}+[(n_l+k) \alpha+k-1-\lambda_\alpha] +(1-\alpha)\frac{kx_{l+2}}{x_{l+1}}=0,\\
(1-\alpha) \frac{kx_{l+1}}{x_{l+2}}+(2 k \alpha+k-1-\lambda_\alpha) +(1-\alpha) \frac{kx_{l+3}}{x_{l+2}}=0,\\
\end{array}
\end{aligned}
\right.
\end{equation*}
\begin{equation*}
\left\{
\begin{aligned}
\begin{array}{l}
\vdots\\
(1-\alpha) \frac{kx_{d-3}}{x_{d-2}}+(2 k \alpha+k-1-\lambda_\alpha) +(1-\alpha) \frac{kx_{d-1}}{x_{d-2}}=0,\\
(1-\alpha) \frac{kx_{d-2}}{x_{d-1}}+[(k+1) \alpha+k-1-\lambda_\alpha] +(1-\alpha) \frac{x_{d}}{x_{d-1}}=0, \\
(1-\alpha) \frac{kx_{d-1}}{x_{d}}+( k \alpha-\lambda_\alpha)=0. \\
\end{array}
\end{aligned}
\right.
\end{equation*}
Since $l\geq d-l+1$ and $k\geq 2$, then
\begin{equation}\label{x/y}
\frac{kx_{d-1}}{x_{d}}=\frac{kx_{1}}{x_{0}},\ \frac{kx_{d-2}}{x_{d-1}}=\frac{kx_{2}}{x_{1}},\ldots,\ \frac{kx_{l+1}}{x_{l+2}}=\frac{kx_{d-l-1}}{x_{d-l-2}}.
\end{equation}
Notice that $n_l\geq 2k$, by the eigenequations corresponding to $x_{d-l-1}$, $x_{l+1}$, these are
\begin{equation*}
\left\{
\begin{aligned}
\begin{array}{l}
(1-\alpha)\frac{kx_{d-l}}{x_{d-l-1}}+(2 k \alpha+k-1-\lambda_\alpha) +(1-\alpha) \frac{kx_{d-l-2}}{x_{d-l-1}}=0,\\
(1-\alpha) \frac{n_lx_l}{x_{l+1}}+[(n_l+k) \alpha+k-1-\lambda_\alpha] +(1-\alpha)\frac{kx_{l+2}}{x_{l+1}}=0.\\
\end{array}
\end{aligned}
\right.
\end{equation*}
Thus $\frac{kx_{d-l}}{x_{d-l-1}}>\frac{n_lx_l}{x_{l+1}}$. Since $n_lx_l>kx_{d-l}$, then $x_{d-l-1}<x_{l+1}$, by Equation (\ref{x/y}), we have $x_i<x_{d-i}$ and $x_d>x_0$, for each $i=1,2,\ldots,d-l-1$.


These complete the proof of this Lemma \ref{th-2}.
\end{proof}
In the following, we use all the notations in Lemma \ref{th-2} unless otherwise stated.
\begin{lemma}\label{th-3}
Let $G=G(l,d-l)$ be a maximal graph in $\mathcal{G}_{n,k}^d$. 
Suppose $l\geq d-l+2$, then $x_{d-i+1}<x_i,\ i=1,2,\ldots,d-l-1$.
\end{lemma}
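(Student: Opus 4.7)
The plan is to proceed by contradiction: suppose $x_{d-i_0+1}\ge x_{i_0}$ for some smallest index $i_0\in\{1,\ldots,d-l-1\}$, and exhibit an edge-shifting operation (applying Lemma \ref{v-u} or Lemma \ref{le:g-uw+vw}) that produces a graph $G'\in\mathcal{G}_{n,k}^d$ with $\lambda_\alpha(G')>\lambda_\alpha(G)$, contradicting the maximality of $G$.

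To set up this contradiction I would first establish a ratio symmetry between the two sides of the big block $K_{n_l}$. Writing $A=\lambda_\alpha-2k\alpha-k+1$ and $B=(1-\alpha)k$, the eigenequations at the singleton ends (positions $0$ and $d$) give $x_1/x_0=x_{d-1}/x_d=(1-\alpha)k/(\lambda_\alpha-\alpha k)$. Because the eigenequations at positions $1$ and $d-1$ share exactly the same coefficients (each position is adjacent to a single-vertex end and a $k$-vertex interior block), the bulk recurrence
\begin{align*}
(\lambda_\alpha-2k\alpha-k+1)\,x_j=(1-\alpha)k\,(x_{j-1}+x_{j+1}),
\end{align*}
valid for $2\le j\le l-2$ on the long side and $l+2\le j\le d-2$ on the short side, propagates the equality by induction on $j$ and yields
\begin{align*}
\rho_j:=\frac{x_{j+1}}{x_j}=\frac{x_{d-j-1}}{x_{d-j}}\qquad (0\le j\le d-l-2).
\end{align*}
Telescoping gives, for $1\le i\le d-l-1$, the identity $x_i/x_{d-i+1}=(x_0/x_d)\,\rho_{i-1}$, so the desired inequality $x_i>x_{d-i+1}$ becomes equivalent to $\rho_{i-1}>x_d/x_0=x_{d-1}/x_1$.

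Next, using the recursion $\rho_j=A/B-1/\rho_{j-1}$, the bound $\rho_j>1$ supplied by Lemma \ref{th-2}(i), and a comparison with the dominant root of $t^2-(A/B)t+1=0$ analogous to the $t_1,t_2$ analysis in the proof of Lemma \ref{th-2}(i), I would argue that the sequence $(\rho_j)_{j\ge 0}$ is monotone on $0\le j\le l-2$. Since $\rho_0=x_1/x_0$, this reduces the whole claim to the single base case $\rho_0>x_{d-1}/x_1$, equivalently
\begin{align*}
x_1^2>x_0\,x_{d-1}.
\end{align*}

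The main obstacle will be this base case, which is where the asymmetry hypothesis $l\ge d-l+2$ must be used. Assuming for contradiction that $x_1^2\le x_0\,x_{d-1}$, equivalently $x_d\ge x_1$, I would construct $G'$ by relocating a $k$-vertex piece from the short side of $K_{n_l}$ to its long side (a naive edge swap between $V_0$ and $V_d$ would shorten the diameter, so the redistribution must be global). A Rayleigh-quotient computation in the spirit of Lemma \ref{v-u} then exploits $x_d\ge x_1$ to force $\lambda_\alpha(G')>\lambda_\alpha(G)$. The delicate structural check is that $G'$ still lies in $\mathcal{G}_{n,k}^d$: the assumption $l\ge d-l+2$ is exactly what guarantees enough ``room'' on the long side so that this relocation preserves both diameter $d$ and $k$-connectivity. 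The resulting contradiction with the maximality of $G$ then completes the proof.
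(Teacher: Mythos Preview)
Your ratio–symmetry observation $\rho_j:=x_{j+1}/x_j=x_{d-j-1}/x_{d-j}$ for $0\le j\le d-l-2$ is correct, and the reformulation $x_i>x_{d-i+1}\iff\rho_{i-1}>x_d/x_0$ is valid. However, the proof has two genuine gaps.

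\textbf{Gap 1: the direction of monotonicity.} Your reduction to the single base case $\rho_0>x_d/x_0$ requires that $(\rho_j)$ be \emph{increasing}. But a short computation shows $\rho_0=(\lambda_\alpha-\alpha k)/[(1-\alpha)k]>A/B=t_1+t_2>t_1$, so $\rho_0$ lies \emph{above} the attracting fixed point of the M\"obius iteration $f(t)=A/B-1/t$; for iterates governed by this recursion the sequence then \emph{decreases} toward $t_1$. The boundary equation at position $1$ differs from the bulk, so the sign of $\rho_1-t_1$ (hence the direction of the bulk monotonicity from $j=2$ onward) is not determined by your argument. If the sequence is decreasing, the minimum is $\rho_{d-l-2}$, not $\rho_0$, and your reduction collapses.

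\textbf{Gap 2: the base case edge-shift.} Even granting the reduction, you still need $x_1>x_d$, and your proposed operation --- ``relocating a $k$-vertex piece from the short side of $K_{n_l}$ to its long side'' --- is not specified, and it is unclear how an operation near $V_l$ could exploit an inequality $x_d\ge x_1$ between the two \emph{far ends}. In the paper this single inequality already requires a multi-step argument: one first uses maximality (shifting the edges of $Z=Z_l$ from $V_{l+1}$ to $V_{l-2}$) to pin down $x_{l-2}<x_{l+1}$; then, assuming $x_d>x_1$, an edge-shift at the ends (moving $V_1\setminus\{w\}$ over to $V_{d-1}\cup V_d$ and rerouting $V_0$) forces $x_{d-1}<x_2$; further shifts cascade this to $x_{l+1}<x_{d-l}<\cdots<x_{l-2}$, contradicting $x_{l-2}<x_{l+1}$. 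Your sketch contains none of these ingredients.

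For comparison, the paper's proof does not use the eigen-ratio structure at all: it is entirely a chain of edge-shifting / Rayleigh-quotient contradictions, first establishing the anchor $x_{l-2}<x_{l+1}$, then the base case $x_d<x_1$ via the cascade above, and finally the inductive step $x_{d-i+1}<x_i\Rightarrow x_{d-i}<x_{i+1}$ by swapping the edge-bundles $V_i\!-\!V_{i+1}$ and $V_{d-i}\!-\!V_{d-i+1}$. Your symmetry idea could in principle replace the inductive step cleanly, but as written it does not avoid --- and does not supply --- the hard work needed for $x_1>x_d$.
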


\begin{proof}
 By Lemma \ref{th-2}, we have $x_0<x_d$, so $x_{d-i}\neq x_i$, $i=1,2,\ldots,d-l-1$.  Recall that $Z=Z_l=\{V_l\setminus V(K_k)\}$. Let $G'$ be a graph obtained from the maximal graph $G$ by deleting all edges between $Z$ and $V_{l+1}$ and adding all edges between $Z$ and $V_{l-2}$. If $x_{l-2}\geq x_{l+1}$, then by Lemma \ref{v-u}, we have $\lambda_{\alpha}(G')>\lambda_{\alpha}(G)$. Thus this contradicts that $G$ is the maximal graph in $\mathcal{G}_{n,k}^k$. Thus $x_{l-2}<x_{l+1}$.  Similarly, we have $x_{l-1}>x_{l+2}$. Next we prove $x_{d}<x_1$.

Suppose $x_{d}>x_1$. We select one vertex $w$ of $V_1$. Since $n_0=n_d=1$, then we denote the two vertices in $V_0$, $V_d$ as $u$, $v$, respectively. Let $G''$ be a graph obtained from $G$ by deleting  all edges between $V_{1}\setminus \{w\}$ and $w$, all edges between $V_{1}\setminus \{w\}$ and $V_{2}$, edge $wu$, and adding edge $vu$, all edges between $V_{1}\setminus\{w\}$ and $V_{d-1}$, all edges between $V_{1}\setminus\{w\}$ and $v$. Obviously, $G''\in \mathcal{G}_{n,k}^d$. Then
\begin{align*}
0>&\lambda_{\alpha}(G'')-\lambda_{\alpha}(G)\\>&(1-\alpha)\big[k(k-1)x_1(x_{d-1}-x_2)+(k-1)x_1(x_d-x_1)+x_0(x_d-x_1)\big]\\+&\alpha\big[(k-1)(x_{d-1}^2-x_2^2)+k(x_d^2-x_1^2)\big]\\>&(1-\alpha)k(k-1)x_1(x_{d-1}-x_2)+\alpha(k-1)(x_{d-1}^2-x_2^2).
\end{align*}
Thus if $x_d>x_1$, then $x_{d-1}<x_2$ holds.

Next we obtain the graph $G''$ by using a different edge-shifting operation. Let $G''$ be the graph obtained from $G$ by deleting all edges between $V_{d-2}$ and $V_{d-1}$, all edges between $V_{2}$ and $V_{3}$ and adding all edges between $V_{d-2}$ and $V_{2}$, all edges between $V_{d-1}$ and $V_{3}$. Then we have
\begin{align*}
0&>\lambda_{\alpha}(G'')-\lambda_{\alpha}(G)>(1-\alpha)k^2(x_{d-2}-x_{3})(x_{2}-x_{d-1}).
\end{align*}
Thus if $x_d>x_1$ and $x_{d-1}<x_2$, then $x_{d-2}<x_3$ holds. Through gradual recursion, let $G''$ be the graph obtained from $G$ by deleting all edges between $V_{j}$ and $V_{j+1}$, all edges between $V_{d-j}$ and $V_{d-j+1}$ and adding all edges between $V_{j}$ and $V_{d-j}$, all edges between $V_{j+1}$ and $V_{d-j+1}$, for $j=2,\ldots,l-2$. Thus we have if $x_d>x_1$,   then $x_{d-1}<x_2$, $x_{d-2}<x_3$,$\ldots$, $x_{l+1}<x_{d-l}<x_{d-l+1}<\ldots<x_{l-2}$.
There is a contradiction that $x_{l-2}<x_{l+1}$ in the maximal graph $G$. Thus $x_{d}<x_1$.

Assume
$x_{d-i+1}<x_{i}$ for $i=1,2,\ldots,d-l-2$,
and we now prove $x_{d-i}<x_{i+1}$.
Let $G''$ be a graph obtained from $G$ by deleting all edges between $V_{i}$ and $V_{i+1}$, all edges between $V_{d-i}$ and $V_{d-i+1}$ and adding all edges between $V_{i}$ and $V_{d-i}$, all edges between $V_{i+1}$ and $V_{d-i+1}$. Then
$$0>\lambda_{\alpha}(G'')-\lambda_{\alpha}(G)>(1-\alpha)k^2(x_{d-i}-x_{i+1})(x_{i}-x_{d-i+1}).$$
Since $x_{i}>x_{d-i+1}$,
we have $x_{d-i}<x_{i+1}$, for each $i=1,2,\ldots,d-l-2$.
These complete the proof.
\end{proof}
Now we have all the ingredients to present our proof of Theorem \ref{th-main}.

\begin{proof}[\textbf{Proof of Theorem \ref{th-main}}]
In this proof, we use the notations in Lemma \ref{th-2} unless otherwise stated.
Let $G^*=G(l,d-l)=\overbrace{K_{n_0}\vee\cdots\vee K_{n_{l-1}}}^{l}\vee K_{n_l}\vee\overbrace{K_{n_{l+1}}\cdots\vee K_{n_d}}^{d-l}$, where $n_0=n_d=1$, $n_{l}\geq 2k$, $n_i=k$,  $i\in\{1,2,\ldots,d-1\}\setminus\{l\}$, as shown in Figure \ref{fig1}.
 By Lemma \ref{th-1}, the result hold when $d=2$ or $3$. Thus we only consider the case $d\geq 4$ in the following.

 In order to prove a contradiction, we suppose $l\geq d-l+2$. Let $G''$ be a graph obtained from $G^*$ by deleting all the edges between $V_l$ and $V_{l+1}$ and
 the edges between $V_{d-l+1}$ and $V_{d-l+2}$, then adding all the edges between  $V_{l+1}$ and $V_{d-l+2}$ and
 the edges between $V_l$ and $V_{d-l+1}$. Evidently,
 $G''=G(l-1,d-l+1)\in\mathcal{G}_{n,k}^d$. Note that $\alpha n_lx_l > kx_{l-1}>kx_{d-l+2}$ by Lemma \ref{th-2} and $x_{l+1}>x_{l-2}>x_{l-1}>x_{d-l+1}$. Then from Equation \ref{xax}, we have
 \begin{align*}
\lambda_{\alpha}(G'')-\lambda_{\alpha}(G^*)\geq&(1-\alpha)[-n_ln_{l+1}x_lx_{l+1}-n_{d-l+1}n_{d-l+2}x_{d-l+1}x_{d-l+2}+\\&n_{l+1}n_{d-l+2}x_{l+1}x_{d-l+2}+n_{l}n_{d-l+1}x_{l}x_{d-l+1}]+\\&\alpha[(n_{l}-n_{d-l+1})x_{d-l+1}^2+(n_{l+1}-n_{d-l+1})x_{d-l+2}^2+(n_{l}-n_{d-l+2})x_{l+1}^2]\\
\geq&(1-\alpha)(n_{d-l+1}x_{d-l+1}-n_{l+1}x_{l+1})(n_lx_l-n_{d-l+2}x_{d-l+2})+\\&\alpha(2k+1-2k)x_{d-l+1}^2\geq 0.
 \end{align*}

Thus $\lambda_{\alpha}(G'')>\lambda_{\alpha}(G^*)$, which contradicts the fact that $G^*$ is the maximal graph in $\mathcal{G}_{n,k}^k$. Thus we have $l\leq d-l+1$.

 Notice that $\Delta(G^*)=(n_{\lfloor\frac{d}{2}\rfloor}+2k-1)$, by Lemmas \ref{le:bound1} and \ref{le:bound2}, our conclusion can be obtained directly.
These complete the proof.
\end{proof}

\end{document}